\title[Hilbert transform along the parabola]{The Hilbert transform along the parabola, the polynomial Carleson theorem and oscillatory singular integrals}
\author{Jo\~ao P. G. Ramos}
\address{Mathematisches Institut der Universit\"at Bonn, Endenicher Allee 60, 53115 Bonn, Germany}
\email{joaopgramos95@gmail.com} 
\keywords{Hilbert transform along the parabola, Carleson operator, singular integrals}
\newcommand{\R}{\mathbb{R}}
\newcommand{\Z}{\mathbb{Z}}
\newcommand{\mmd}{\mathrm{d}}
\newtheorem{theorem}{Theorem}
\newtheorem{lemma}{Lemma}
\newtheorem{prop}{Proposition}
\newtheorem{quest}{Question}
\def\XXint#1#2#3{{\setbox0=\hbox{$#1{#2#3}{\int}$}
\vcenter{\hbox{$#2#3$}}\kern-.5\wd0}}
\begin{document}
\subjclass[2010]{42B20, 42B25, 44A12}
\begin{abstract}
We make progress on an interesting problem on the boundedness of maximal modulations of the Hilbert transform along the parabola. Namely, if we consider the multiplier
arising from it and restrict it to lines, we prove uniform $L^p$ bounds for maximal modulations of the associated operators. Our methods consist of identifying where to 
use effectively the polynomial Carleson theorem, and where we can take advantage of the presence of oscillation to obtain decay through the $TT^*$ method.
\end{abstract}
\maketitle

\section{Introduction}

\subsection{Historical background} We define the Hilbert transform along the parabola as 
\begin{equation}\label{riesz}
\mathcal{H}_2f(x,y) = \text{p.v.} \int_{\R} f(x-t,y-t^2) \frac{\mmd t}{t},
\end{equation}
where we let $f \in \mathcal{S}(\R^2).$ This operator has an anisotropic symmetry and has been considered in the wider framework of anisotropically homogeneous operators, dating back to the work of Fabes and Rivi\`ere \cite{FabesRiviere} in the 1970's.
In this context, $L^p$ estimates for such operators imply additional regularity of solutions of certain associated parabolic partial differential equations. \\ 

For the particular case of the Hilbert transform along the parabola, the works of Nagel, Rivi\`ere and Wainger \cite{NRW1, NRW2} prove that it is indeed bounded in $L^p(\R^2)$. Their results
provide, in fact, $L^p-$bounds for higher dimensional generalizations of this operator. Possible generalizations have been further explored in the nilpotent groups case by Christ \cite{MChrist}, as well as the question 
of weak-type endpoint estimates in the work of Christ and Stein \cite{ChristStein} and, more recently, in the work of Seeger, Tao and Wright \cite{SeegerTaoWright}. \\ 

Parallelly to that, the theory of \emph{maximally modulated} Calder\'on-Zygmund operators also developped in the last 50 years. Indeed, in 1966, in order to prove almost everywhere convergence of Fourier 
series in $L^2,$ Carleson \cite{Carleson} considers the operator 
\[
Cf(x):= \sup_{N \in \R} \left| \int_{\R} f(x-t) e^{iNt} \, \frac{\mmd t}{t}\right| = \sup_{N \in \R} |H(e^{iN(\cdot)} f)|(x). 
\]
This is now called the \emph{Carleson operator}. After Carleson's paper, many works have been dedicated to sharpening and perfecting
his proof. Hunt \cite{Hunt} extended, in 1967, Carleson's result to all $L^p$ spaces, $p \in (1,+\infty),$ and Fefferman \cite{Fefferman} and Lacey and Thiele \cite{LaceyThiele} provided different proofs of the same result. All of the proofs 
above share, however, the property of employing a time-frequency decomposition to encompass translation, dilation and modulation symmetries of the Carleson operator. \\

Inspired by that result, E. M. Stein \cite{Stein2} posed the following problem: if instead of linear phases, we take suprema over \emph{polynomial} phases, do we still have $L^p$ bounds? Namely, if one considers the operator 
\[
f \mapsto \sup_{\text{deg }P \le n} \left| \int_{\R} f(x-t) e^{iP(t)} \, \frac{\mmd t}{t}\right|,
\]
is it bounded in $L^p(\R), p \in (1,+\infty)?$ A first step in this direction is the work of Stein and Wainger \cite{SteinWainger}, where they consider a restricted supremum over polynomials \emph{without} the linear term.
Unlike the proofs of bounds for the Carleson operator, this does not rely on a time-frequency decomposition directly, but on a dyadic decomposition and $TT^*$ method to exploit oscillatory integral estimates. \\ 

In subsequent works, Lie \cite{Lie1} treated the case of weak-type $(2,2)$ bounds for the operator above if $n=2,$ and considered in \cite{Lie2} the general $n \ge 1$ case, in the one dimensional setting. 
More recently, Zorin-Kranich \cite{pavel} extended the analysis of the operator above for \emph{higher} dimensions and Calder\'on-Zygmund operators with fairly general conditions. Their techniques, however, resort more to time-frequency methods 
in the style of Fefferman \cite{Fefferman} rather than the $TT^*$ strategy of Stein and Wainger. \\ 

Pierce and Yung \cite{PierceYung} considered a hybrid version of the two parallel kinds of results we discussed. In particular, they consider operators of the form 
\[
f(x,y) \mapsto \sup_{P \in \mathcal{P}} \left| \int_{\R^d} f(x-t,y-|t|^2) e^{iP(t)} K(t) \, {\mmd t} \right|,
\]
where $K$ is a suitable Calder\'on-Zygmund kernel and $\mathcal{P}$ some finite-dimensional subspace of polynomials. They obtain $L^p$ estimates for certain subspaces that avoid linear and some quadratic terms, as long as $d \ge 2$. Subsequently to it, Guo, Pierce, 
Roos and Yung \cite{GPRY} considered the $d=2$ case by taking a partial supremum for curves like $(t,t^d)$ and $P(t) = N\cdot t^m$. For these results, as well as the ones in \cite{PierceYung}, the strategy resembles that of Stein and Wainger, in the sense that the main 
tools are still dyadic decompositions, $TT^*$ estimates and suitable oscillatory integral estimates to obtain decay. \\ 

Continuing this line of thought, the following question arises naturally in \cite{GPRY}: 

\begin{quest}\label{parabcarl} For $f \in \mathcal{S}(\R^2),$ is the \emph{parabolic} Carleson operator 
\begin{equation}
\mathcal{C}_2f(x,y) := \sup_{N,M \in \R} \left| \int_{\R} f(x-t,y-t^2) e^{iNt + i Mt^2} \, \frac{\mmd t}{t}\right|
\end{equation}
bounded in $L^2(\R^2)?$ 
\end{quest} 

This is nothing but a supremum of the Hilbert transform along the parabola of all possible modulations of $f.$ In other terms, this operator admits a representation as 
\[ 
\sup_{N,M}|\mathcal{H}_2(e^{iN(\cdot)_1 + iM(\cdot)_2}f)(x,y)| = \sup_{N,M}|\mathcal{F}^{-1} ( m_2(\cdot+(N,M)) \widehat{f})(x,y)|,
\]
where $m_2\left(\frac{\xi}{2\pi},\frac{\eta}{\sqrt{2\pi}}\right) = \int_{\R} e^{i (\xi \cdot t + \eta \cdot t^2)} \frac{\mmd t}{t},$ and $\mathcal{F}f(\xi) = \widehat{f}(\xi) = \int_{\R^2} f(y) e^{-2 \pi i \xi \cdot y} \, \mmd y$  denotes the Fourier transform. Here and henceforth 
we abuse notation and denote by $m_2$ the dilation given above by $m_2(\cdot/2\pi, \cdot/\sqrt{2\pi}).$ \\ 

Partial progress in Question \ref{parabcarl} has been made by Roos \cite{Roos}, where he extends the techniques from Lacey and Thiele \cite{LaceyThiele} to the anisotropic case. The main obstacle to apply his result to Question \ref{parabcarl} is the fact that
$m_2$ is only H\"older continuous of exponent $<1$ along $\R,$ while Roos needs regularity of his multipliers greater than three times the anisotropic degree of homogeneity. Interestingly enough, Zorin-Kranich mentions in \cite{pavel} that it should be possible to extend his 
results on the polynomial Carleson operator to the anisotropic context. This would, however, still not imply the validity of Question \ref{parabcarl}, as the techniques from Zorin-Kranich only yield bounds for symbols with regularity at least equal to the homogeneity degree. 

\subsection{Main results} We are interested in the restriction of $m_2(\xi,\eta)$ to lines. That is, we consider the family of one-dimensional functions given by $m_{a,b}(\eta) = m_2(a\eta + b, \eta), a,b \in \R.$ In order to consider also horizontal lines, we define $m_{+\infty,b}(\eta) := m_2(\eta,b).$  They define, via Fourier inversion, a family of linear operators 
$T_{a,b}f(x) := \mathcal{F}^{-1}(m_{a,b}\widehat{f})(x)$ in dimension 1. Our main result deals with maximal modulations of these operators -- or, analogously, maximal translations in the multiplier side: 

\begin{theorem}\label{lines} Let $\mathcal{C}_{a,b}f(x) = \sup_{N \in \R} |T_{a,b}(e^{i N \cdot} f)(x)|.$ Then it holds that 
\[
\sup_{a \in \R \cup \{+\infty\}} \| \sup_{b \in \R} \mathcal{C}_{a,b}\|_{p \to p} < +\infty, \; \forall p \in (1,+\infty).
\]
\end{theorem}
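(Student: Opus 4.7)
The plan is to transfer the problem to physical space and then separate the degenerate case $a=+\infty$ from the generic case of finite $a$. By Fubini, and absorbing the renormalisations from the Fourier convention and from the dilation of $m_2$, one has
\[
T_{a,b}g(x) = \int_\R g(x + at + t^2)\, e^{ibt}\,\frac{\mmd t}{t}\,\,(a\in\R),\qquad T_{+\infty,b}g(x) = \int_\R g(x-t)\, e^{ibt^2}\,\frac{\mmd t}{t}.
\]
After applying $e^{iN\cdot}$, pulling out the free phase $e^{iNx}$ and (for $a\in\R$) reparametrising $B := aN+b$ (a bijection in $b$ for each fixed $a,N$), the maximal operator of Theorem~\ref{lines} becomes
\[
\sup_{b\in\R}\mathcal{C}_{a,b}g(x) = \sup_{B,N\in\R}\Bigl|\int_\R g(x + \gamma_a(t))\, e^{i(Bt+Nt^2)}\,\frac{\mmd t}{t}\Bigr|,
\]
with $\gamma_a(t) = at + t^2$ for $a\in\R$ and $\gamma_{+\infty}(t) = -t$. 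For $a=+\infty$ the right-hand side is exactly the one-dimensional polynomial Carleson operator of degree $\leq 2$, and Lie's theorem \cite{Lie1,Lie2} immediately delivers the required $L^p$ bound, with no $a$-dependence to track.

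The bulk of the work is the finite-$a$ case, and the idea is to decompose the kernel dyadically, $1/t = \sum_{k\in\Z} \psi_k(t)/t$ with $\psi_k$ a smooth bump on $\{|t|\sim 2^k\}$, and split the analysis of
\[
\mathcal{T}_a^k[B,N] g(x) := \int_\R g(x + \gamma_a(t))\, e^{i(Bt + Nt^2)}\, \frac{\psi_k(t)}{t}\, \mmd t
\]
into two regimes dictated by the size of $|N|2^{2k}$. In the \emph{low-quadratic} regime $|N|2^{2k}\lesssim 1$ the factor $e^{iNt^2}$ is essentially frozen on $\mathrm{supp}\,\psi_k$, and (up to a Taylor-type error that accumulates acceptably) the piece reduces to a block of the $B$-maximally modulated Hilbert transform along the curve $\gamma_a$. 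When $\gamma_a$ is monotone on $\mathrm{supp}\,\psi_k$, which happens in the sub-cases $|a|\gg 2^k$ and $|a|\ll 2^k$ (after splitting $t>0$ and $t<0$), a change of variable $u = \gamma_a(t)$ converts the block into one for Lie's polynomial Carleson operator of degree $\leq 2$, with constants uniform in $k$ and $a$; recombining these blocks before taking the supremum, as in the standard Carleson pairing of truncation scale with modulation frequency, avoids any logarithmic loss. In the \emph{high-quadratic} regime $|N|2^{2k}\gg 1$ I would run the $TT^*$ argument of Stein--Wainger \cite{SteinWainger}: the symbol of $(\mathcal{T}_a^k)^*\mathcal{T}_a^k$ has a non-stationary phase, and van der Corput estimates yield decay of the form $(|N|2^{2k})^{-\delta}$, whose sum in $k$ is bounded uniformly in $(a,B,N)$. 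Reassembling the two regimes gives the $L^p$ control with constant uniform in $a$.

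The \textbf{main technical obstacle} is the critical scale $|a|\sim 2^k$ inside the low-quadratic regime: the stationary point $t=-a/2$ of $\gamma_a$ then lies inside $\mathrm{supp}\,\psi_k$ while the kernel singularity remains at $t=0$, so neither the change of variable $u=\gamma_a(t)$ nor a direct Carleson reduction is available. I would handle this by a further $t$-cutoff splitting $\mathrm{supp}\,\psi_k$ into $\{|t|\lesssim|a|/10\}$---where $|t^2|\ll|at|$, so $\gamma_a$ is essentially linear and the reduction to Lie's theorem still goes through---and its complement, on which $|1/t|\lesssim 2^{-k}$ is a smooth, non-singular amplitude and the resulting compactly supported oscillatory integral near the stationary point $t=-a/2$ is controlled by standard van der Corput estimates. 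A secondary subtlety is ensuring uniformity in $a$ of the $TT^*$ constants in the high-quadratic regime, which reduces to checking the non-degeneracy of the mixed Hessian of the phase $(t,s)\mapsto\gamma_a(t)-\gamma_a(s)$; this is immediate since $\gamma_a''\equiv 2$.
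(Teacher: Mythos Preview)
Your reformulation is correct and, up to conventions, coincides with the paper's starting point: after Fourier inversion and completing the square the paper also arrives at $\int f(x+at-t^2)\,e^{ibt}\,\tfrac{\mmd t}{t}$, and absorbing the modulation gives exactly your $\sup_{B,N}\bigl|\int g(x+\gamma_a(t))\,e^{i(Bt+Nt^2)}\,\tfrac{\mmd t}{t}\bigr|$. The $a=+\infty$ case is indeed immediate from the quadratic Carleson theorem, as in the paper.

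The genuine gap is in your low-quadratic regime. After freezing $e^{iNt^2}$ you are left with $\sup_B\bigl|\int g(x+\gamma_a(t))\,e^{iBt}\,\psi_k(t)\,\tfrac{\mmd t}{t}\bigr|$, and you assert that the change of variable $u=\gamma_a(t)$ converts this into a block of Lie's polynomial Carleson operator of degree $\le 2$. It does not. The inverse $t(u)=\tfrac{1}{2}\bigl(-a\pm\sqrt{a^2+4u}\bigr)$ involves a square root, so the phase $e^{iBt(u)}$ is \emph{not} polynomial in $u$; the same is true in the sub-regime $|a|\ll 2^k$, where $t(u)\approx \pm\sqrt{u}$. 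Thus the comparison with Lie's theorem is unavailable, and since $B$ ranges over all of $\R$ you cannot Taylor-expand $t(u)$ to fixed order and absorb the remainder uniformly. This square-root phase is exactly the obstacle the paper isolates: after the analogous change of variables the paper reduces matters to the model operators $\mathfrak{C}^Rf(x)=\sup_{N,b}\bigl|\int_{-R}^R f(x-t)\,e^{iNt}e^{ib[t+1]^{1/2}}\,\tfrac{\mmd t}{t}\bigr|$ (Proposition~\ref{operss}), and the bulk of the work (Theorem~\ref{main}) is to bound these. That proof does \emph{not} reduce to degree-$2$ Carleson: for large $b$ one must Taylor-approximate $\sqrt{t+1}$ to degree $5$ on the shrinking interval $[-b^{-1/6},b^{-1/6}]$, invoke degree-$5$ polynomial Carleson there, and run a delicate $TT^*$ argument (Propositions~\ref{propdec} and \ref{doubledecay}) on the remaining annuli to extract summable decay in $\lfloor\log_2 b\rfloor$. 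The paper even shows (Proposition~\ref{polyinfty}) that no bounded-degree polynomial can approximate $b\sqrt{t+1}$ on $[-1/2,1/2]$ uniformly in $b$, so your hoped-for degree-$2$ reduction cannot succeed in principle.

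In short, your scheme correctly identifies the two tools (polynomial Carleson and $TT^*$) and the $a=+\infty$ case, but the pivot ``$u=\gamma_a(t)$ gives polynomial Carleson'' is where the real content of the theorem hides; what is actually needed there is essentially the paper's Theorem~\ref{main}, which your outline does not supply.
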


Intuitively, Theorem \ref{lines} represents taking a very thin strip around the line $(a \eta + b, \eta)$ and a function with Fourier transform supported there and calculating $\mathcal{C}_2$ with this additional restriction. \\

It is not hard to see that Theorem \ref{lines} follows if the answer to Question \ref{parabcarl} is affirmative. Indeed, for any line $\ell \subset \R^2$ as above consider a strip $S_{\delta}$ of width $\delta >0$ with direction $\ell.$ Consider also the set of lines 
$\ell' \sim \ell$ parallel to $\ell.$ If we consider functions $F_{\delta}$ such that their Fourier transform is supported on $S_{\delta},$ is essentially constant along the perpendicular direction to $\ell$ and equals $\widehat{g}$ on $\ell$,  we have, formally, 
\begin{equation}\label{implication}
\|\sup_{\ell'\sim \ell} \mathcal{C}_{\ell'}g\|_{L^2(\R)} \le \lim_{\delta \to 0} \|\mathcal{C}_2(F_{\delta})\|_2 \le C \limsup_{\delta \to 0} \|F_{\delta}\|_2 = C \|g\|_{L^2(\R)}.
\end{equation}
We elaborate more on \eqref{implication} in Section \ref{comments}. \\

Our first task is to pass from the rather complicated formulation in Theorem \ref{lines} to a formulation with which we can work more directly. This is the main content of the next proposition, 
which we prove in Section \ref{reduction}.

\begin{prop}\label{operss} Let $[u]^{1/2}$ denote either $|u|^{1/2}$ or $\text{sign}(u)|u|^{1/2}.$ Suppose that the operators 
\begin{equation}\label{oper}
\mathfrak{C}^Rf(x) = \sup_{N,b \in \R} \left| \int_{-R}^{R} f(x-t) e^{iNt} e^{ib[t+1]^{1/2}} \, \frac{\mmd t}{t} \right| 
\end{equation}
are bounded in $L^p$, for $1 < p < +\infty,$ independently of the truncating parameter $R>0.$ Then Theorem \ref{lines} holds.  
\end{prop}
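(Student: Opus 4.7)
The strategy is to transfer $\sup_b \mathcal{C}_{a,b}$ from the multiplier side to the physical side, rescale to eliminate the dependence on $a$, and straighten the parabolic convolution curve so that the resulting multi-parameter maximal operator matches the template $\mathfrak{C}^R$.

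Using the integral representation of $m_2$ and Fubini, $T_{a,b}(e^{iN(\cdot)}f)(x)$ equals, up to a universal constant, $\int_\R f(x + at + t^2)\, e^{i((aN+b)t + Nt^2)}\,\mmd t/t$. Since $(b, N) \mapsto (N' := aN + b, N)$ is a bijection of $\R^2$ for every $a \in \R$, one obtains
\[
\sup_b \mathcal{C}_{a,b} f(x) = C\sup_{N, N' \in \R}\left|\int_\R f(x + at + t^2)\, e^{iN't + iN t^2}\,\frac{\mmd t}{t}\right|.
\]
For $a = +\infty$, the analogous derivation produces the $1$-dimensional polynomial Carleson operator of degree $2$, bounded on $L^p$ by Lie \cite{Lie2}. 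For $a \in \R \setminus \{0\}$, the anisotropic dilation $t \mapsto |a| t$, $x \mapsto a^2 x$ absorbs $a$ into the function and reduces the estimate (uniformly in $a$) to the two cases $a = \pm 1$.

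Working out $a = 1$ in detail (the case $a = -1$ is symmetric), I substitute $u = t + t^2 = (t + 1/2)^2 - 1/4$; the two branches $t = -1/2 \pm \sqrt{u + 1/4}$ each parameterize $u > -1/4$. Using $t^2 = u - t$, the quadratic phase $N' t + N t^2$ reduces to $(N' - N)t + Nu$, which on the two branches equals $\pm N_1 \sqrt{u + 1/4} + N_2 u$ modulo a constant, with $N_1 := N' - N$, $N_2 := N$ free in $\R$. A direct computation of the Jacobians yields kernels $K_\pm(u)$ satisfying $K_+(u) + K_-(u) = 1/u$; crucially, while each $K_\pm$ is only of order $1/(2u)$ at infinity, their sum recovers the full Hilbert kernel. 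Combining the branches via $2\cos(N_1\phi) = e^{iN_1\phi} + e^{-iN_1\phi}$ and $2i\sin(N_1\phi) = e^{iN_1\phi} - e^{-iN_1\phi}$ with $\phi(u) = \sqrt{u + 1/4}$ rewrites the total operator as a sum of two terms: one involving the Hilbert kernel $1/u$, and one involving the kernel $(K_+ - K_-)(u) = 2\sqrt{u+1/4}/(u(4u+1))$, each multiplied by a single complex exponential $e^{i N_1 \phi(u) + i N_2 u}$.

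After the rescaling $u = t'/4$ that turns $\sqrt{u + 1/4}$ into $\tfrac{1}{2}\sqrt{t' + 1}$ and relabels the parameters, the Hilbert-kernel term is exactly of the form $\mathfrak{C}^R$ (with the convention $[t' + 1]^{1/2} = \sqrt{t'+1}$), for $R$ large enough to absorb the effective cutoff; invoking uniform boundedness in $R$ closes this contribution. The term with kernel $K_+ - K_-$ is still $\sim 1/u$ near $0$ but decays as $1/u^{3/2}$ at infinity, so a further splitting separates a second $\mathfrak{C}^R$-type piece from an integrable remainder controlled by the Hardy--Littlewood maximal function. Finally, for $a = 0$, the evenness of $f(x + t^2)$ in $t$ together with the oddness of $1/t$ reduces the operator to a one-sided singular integral with $\sin(N' \sqrt{u})$ in the phase, which after expanding $\sin$ yields two $\mathfrak{C}^R$-type terms under the $[u]^{1/2} = \mathrm{sign}(u)|u|^{1/2}$ convention. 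The principal obstacle is precisely the branch-splitting: each individual $K_\pm$ has a $1/(2u)$ tail at infinity, so estimating one branch at a time fails to recover the Hilbert cancellation, and one must instead exploit the algebraic identity $K_+ + K_- = 1/u$ together with the sign-flipping freedom $\pm \sqrt{u + 1/4}$ afforded by the supremum over $N_1 \in \R$ to reassemble the two branches into a form amenable to $\mathfrak{C}^R$.
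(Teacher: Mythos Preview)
Your overall strategy mirrors the paper's: pass to the physical side, rescale away $a$, straighten the curve via $u=t+t^2$, and split the resulting kernel into a Hilbert piece and an integrable piece. Two issues arise, one minor and one substantive.

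\textbf{Minor algebraic slip.} With $\phi=\sqrt{u+1/4}$ one computes
\[
K_+(u)=\frac{1}{2\phi(\phi-\tfrac12)},\qquad K_-(u)=\frac{-1}{2\phi(\phi+\tfrac12)},
\]
so that $K_+-K_-=\dfrac{1}{u}$ and $K_++K_-=\dfrac{1}{2u\phi}$. You have these reversed. After the swap, the decomposition becomes $\cos(N_1\phi)\,(K_++K_-)+i\sin(N_1\phi)\,(K_+-K_-)$, i.e.\ the Hilbert kernel sits with $\sin$, and the $u^{-3/2}$-decaying kernel sits with $\cos$. This does not break the scheme, but it means the ``Hilbert-kernel term'' is the $\sin$ term, not the $\cos$ term.

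\textbf{Substantive gap: the one-sided tail.} After the rescaling $u=t'/4$, the $\sin$ term is
\[
\int_{-1}^{\infty} f(x-t')\,e^{iN_2' t'}\,\sin\!\big(N_1'\sqrt{t'+1}\big)\,\frac{\mathrm{d}t'}{t'}.
\]
This is \emph{not} of the form $\mathfrak{C}^R$: the domain is $(-1,\infty)$, whereas $\mathfrak{C}^R$ integrates over the symmetric interval $(-R,R)$. On $(-1,1)$ you can indeed expand $\sin$ and invoke $\mathfrak{C}^1$, but the tail $\int_{1}^{\infty}\sin(N_1'\sqrt{t'+1})\,\mathrm{d}t'/t'$ is not absolutely integrable and cannot be bounded by the maximal function; nor does writing $\int_{-1}^{R}=\int_{-R}^{R}-\int_{-R}^{-1}$ help, since the subtracted piece has a $1/t'$ kernel on an interval of length $R-1$ and contributes $\sim(\log R)\,Mf(x)$, which is not uniform in $R$. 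The ``sign-flipping freedom'' in $N_1$ does not repair this: replacing $N_1$ by $-N_1$ permutes the two exponential pieces but does not convert the one-sided domain into a symmetric one. The same difficulty recurs in your $a=0$ case, where the phase is $\sqrt{u}$ rather than $\sqrt{u+1}$; expanding $\sin$ there produces one-sided integrals with phase $[u]^{1/2}$, not the $[t+1]^{1/2}$ of $\mathfrak{C}^R$.

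The paper closes exactly this gap with an extra ingredient you are missing: the dilation symmetry of $\mathfrak{C}^{\infty}$ shows that the family $\mathfrak{C}_a f(x)=\sup_{N,b}\bigl|\int_{\R}f(x-t)e^{iNt}e^{ib[t+a]^{1/2}}\,\mathrm{d}t/t\bigr|$ is bounded uniformly in $a>0$, and Fatou then yields boundedness of the $a\to 0$ limit with phase $[t]^{1/2}$ (this is Proposition~\ref{consequence}). An algebraic identity then expresses the one-sided operator $\int_0^{\infty}(\mathcal{M}_Nf)(x-t)\frac{e^{ib\sqrt t}-e^{-ib\sqrt t}}{2t}\,\mathrm{d}t$ as a combination of the two full-line operators (one for each convention of $[\cdot]^{1/2}$). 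Your argument needs an analogous step to handle the $(1,\infty)$ tail of the $\sin$ term and the $a=0$ case; without it, the reduction to $\mathfrak{C}^R$ is incomplete.
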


We still have to bound the operator arising from the proposition above. The following result asserts the boundedness of the second maximal operator. 

\begin{theorem}\label{main} Let $\mathfrak{C}^R$ be defined as \eqref{oper} above. It holds that
\[
\|\mathfrak{C}^Rf\|_p \le C_p \|f\|_p,
\]
for all $f \in L^p(\R)$ and all $p \in (1,+\infty),$ and $C_p$ \emph{independent} of $R>0.$ 
\end{theorem}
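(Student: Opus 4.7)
The approach is to localize $\mathfrak{C}^R$ near the two singular points of its integrand --- the kernel singularity at $t=0$ and the phase singularity at $t=-1$ --- and to apply complementary techniques at each: the polynomial Carleson theorem where the phase admits an effective polynomial approximation, and the $TT^*$ method where strong oscillation of the integrand provides decay. To this end, I would introduce a smooth partition of unity $1 = \chi_0(t) + \chi_{-1}(t) + \chi_\infty(t)$ with $\chi_0$ supported near $0$, $\chi_{-1}$ supported near $-1$, and $\chi_\infty$ supported away from both, and decompose $\mathfrak{C}^R = \mathfrak{C}_0 + \mathfrak{C}_{-1} + \mathfrak{C}_\infty$, bounding each piece separately with constants uniform in $R$.

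For the piece $\mathfrak{C}_0$ near the kernel singularity: on the support of $\chi_0$ the function $[t+1]^{1/2}$ is real analytic, and Taylor expansion yields $b[t+1]^{1/2} = b + bt/2 + b\sum_{k \ge 2} c_k t^k$. The constant phase $e^{ib}$ drops out under the absolute value, and the linear term $bt/2$ can be absorbed into the modulation $Nt$ via the change of supremum variable $N \mapsto N + b/2$. The residual phase is a nonlinear polynomial in $t$ whose coefficients all scale in $b$, plus a tail. I would then dyadically decompose in $|t| \sim 2^{-j}$, and on each scale pick a Taylor truncation order $n = n(b, j)$ so that in the regime where $|b|$ is small relative to $2^{j(n+1)}$ the tail factor $e^{ibR_n(t)}$ is essentially a bounded smooth function and the polynomial Carleson theorem of Lie and Zorin-Kranich controls the main polynomial phase; in the complementary regime the phase is sufficiently nondegenerate that a direct $TT^*$/oscillatory integral estimate yields decay.

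For the piece $\mathfrak{C}_{-1}$ near the phase singularity: on the support of $\chi_{-1}$ the kernel $\chi_{-1}(t)/t$ is a bounded smooth function, so no kernel singularity is present. The crucial observation is that the phase derivative $\partial_t\bigl(b[t+1]^{1/2}\bigr) = b/\bigl(2[t+1]^{1/2}\bigr)$ blows up as $t \to -1$, so $e^{ib[t+1]^{1/2}}$ is strongly oscillatory there for $|b|$ large. After linearizing the supremum by measurable $N(x), b(x)$, I would dyadically decompose $|t+1| \sim 2^{-j}$ and apply the $TT^*$ method on each dyadic piece: the $TT^*$ kernel's phase combines $N(x) t - N(x') t'$ with $b(x)[t+1]^{1/2} - b(x')[t'+1]^{1/2}$, and van der Corput estimates on the $t, t'$ integration yield decay in both $j$ and $|b|$ sufficient to sum in $j$. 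The far-field operator $\mathfrak{C}_\infty$ is handled in the same $TT^*$ framework, without the complications of either singularity, and is by comparison routine.

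The main obstacle I anticipate is the piece $\mathfrak{C}_0$: handling the full range of $b$ requires a delicate balance between the polynomial Carleson theorem --- whose operator norm generally depends on the polynomial degree --- and direct oscillatory integral estimates for the tail. One must in particular choose the truncation order $n$ in a dyadic-scale-dependent manner, verify that neither the constants from polynomial Carleson nor the decay from the $TT^*$ arguments degrade when summed over scales, and ensure that all the estimates are uniform in the truncation parameter $R$.
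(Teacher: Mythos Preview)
Your outline is the paper's strategy: polynomial Carleson where the phase $[t+1]^{1/2}$ is well approximated by a Taylor polynomial, and $TT^*$ with van der Corput where oscillation takes over. Two remarks sharpen your sketch.

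First, the piece near $t=-1$ requires no work at all. On any neighborhood of $-1$ bounded away from $0$, say $[-2,-1/2]$, the kernel $1/t$ is bounded, so
\[
\left| \int_{-2}^{-1/2} f(x-t)\,e^{iN(x)t}\,e^{ib(x)[t+1]^{1/2}} \,\frac{\mmd t}{t}\right| \lesssim \int_{-2}^{-1/2} |f(x-t)|\,\mmd t \lesssim Mf(x),
\]
and no dyadic decomposition in $|t+1|$ or $TT^*$ is needed. The phase singularity is irrelevant once the kernel singularity is absent.

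Second, your suggestion to let the Taylor order vary as $n=n(b,j)$ is precisely the trap you flag: the polynomial Carleson constant grows with the degree, so unbounded $n$ is fatal. The paper's resolution is that a \emph{single fixed} degree works. One takes the degree-$5$ Taylor polynomial $P_b$ of $b\sqrt{t+1}$ and compares only on the shrinking interval $[-b^{-1/6},b^{-1/6}]$, where $|b\sqrt{t+1}-P_b(t)| \lesssim b\,t^6$ integrates against $|t|^{-1}$ to $O(1)$; this reduces to a maximally truncated polynomial Carleson operator of degree $\le 5$. On the remaining shell $b^{-1/6}\le |t| \le 1/2$ one runs $TT^*$: with $b\sim 2^k$ and dyadic scale $|t|\sim 2^{j-2k}$, the van der Corput bound per piece is of order $2^{\beta(\frac{7k}{3}-\frac{399j}{300})}$, and summing over $j\in[\tfrac{11k}{6},2k]$ yields geometric decay in $k$ \emph{only} because the inner radius $b^{-1/6}$ forces $j\ge \tfrac{11k}{6}>\tfrac{7k}{4}$. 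Replaying this with degree $d$ replaces $\tfrac{11}{6}$ by $\tfrac{2d+1}{d+1}$, and the constraint $\tfrac{2d+1}{d+1}>\tfrac{7}{4}$ forces $d\ge 4$; the choice $d=5$ just gives slack. So the balance you anticipate is struck not by adapting $n$ to the scale but by one sufficiently high fixed degree together with a $b$-dependent crossover radius --- and Proposition~\ref{polyinfty} confirms that no fixed degree can approximate on all of $[-1/2,1/2]$ uniformly in $b$, which is why the comparison interval must shrink.
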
 

The proof of Theorem \ref{main} is the main novelty of this article. In order to prove it, we employ two different ideas. More specifically, we first prove that we can regard 
the parameter $b$ as belonging to a fixed dyadic scale $\sim 2^k$, as long as we prove summable decay in $|k|.$ We then break the interval of integration defining $\mathfrak{C}^Af$ into distinct 
regimes of intervals, namely mainly the ones where oscillatory behaviour is strong enough to enable the use of the $TT^*$ method, and the ones where the phase is mimics a polynomial Carleson operator, as considered
in \cite{Lie2,pavel}. 

The crucial point of using the polynomial Carleson theorem together with an application of $TT^*$ to prove Theorem \ref{main} is that this technique is optimal. That is, for smaller scales, the oscillatory integral estimates used in the $TT^*$ method only give us a bound not decaying with $b \sim 2^k$. Truncating at a high power of $b$ 
does not appear randomly. On the other hand, one asks whether it is possible to use a comparison to a polynomial Carleson operator \emph{directly} at least in the interval $[-1/2,1/2].$ We finish our discussion of the proof of Theorem \ref{lines} by proving that it is impossible unless
letting the degree of the polynomial tend to infinity. \\

\subsection{Notation} Some remarks are in order to facilitate the reader's understanding: 
\begin{enumerate} 
\item We denote by $C>0$ a constant that may change from line to line; 
\item We write throughout the paper $A \lesssim B$ to mean that $A \le C \cdot B,$ for some constant $C.$ If $C$ depends on some parameter $\delta$ in a relevant way, we write $A \lesssim_{\delta} B;$ 
\item $\psi$ generally denotes positive bump functions with some partition of unity property, whereas $\phi,\varphi$ denote usually phase functions in oscillatory integrals; 
\item Finally, unless otherwise stated, we consider the functions in the proofs below to belong to $\mathcal{S}(\R)$ and extend the respective bounds by density.
\end{enumerate} 

\section{Proof of Proposition \ref{operss}}\label{reduction} In order to prove Theorem \ref{lines}, we first reduce the analysis to simpler operators. We mention that, if we let go of the uniformity of the estimates on the line, 
the comparisons in this section can be made much looser. Therefore, the emphasis is on getting bounds \emph{independent} on the paramers when comparing.

\subsection{From two to one parameter} We reduce the task of proving Theorem \ref{lines} to the analysis of a one-parameter family of operators. 
We must consider \emph{all} lines in the analysis, and therefore also $m_{+\infty,b}(\xi) = m_2(\xi,b)$ must be considered as a multiplier. Nevertheless, an analysis 
identical to the one undertaken below shows that the operator 
\[
\sup_{b,N} |\mathcal{F}^{-1} (m_{+\infty,b} \widehat{(\mathcal{M}_N f)})|
\]
is just a quadratic Carleson operator, so, by the results in \cite{Lie1,Lie2,pavel},
we need not include it in our discussion. 
The main tool to our reduction will be the following Lemma: 

\begin{lemma}\label{trivial} Let $\{g_{a,b}\} \subset L^1(\R)$ be a family of positive functions with $|g_{a,b}| \le h_{a}$ pointwise for another family $\{h_{a}\},$ which 
is uniformly bounded in $L^1.$ I.e., $\sup_{a} \|h_{a}\|_1 < +\infty.$ It holds that 
\[
\sup_{a}\| \sup_{b,N} |g_{a,b} * (\mathcal{M}_Nf)| \|_p \le C \|f\|_p,
\]
 with $\mathcal{M}_Nf(x) = e^{2\pi iNx}f(x).$
\end{lemma}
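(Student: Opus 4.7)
The plan is to exploit the fact that modulation has modulus one, so after taking absolute values inside the convolution the dependence on $N$ disappears entirely; the dependence on $b$ then disappears via the pointwise majorization by $h_a$. This reduces the maximal operator to a straightforward convolution with an $L^1$ function, uniformly in $a$, so Young's inequality closes the argument.

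Concretely, for any fixed $a, b, N$ and any $x \in \R$, I would apply the triangle inequality under the integral sign:
\[
|g_{a,b} * (\mathcal{M}_N f)(x)| = \left| \int_{\R} g_{a,b}(y) e^{2\pi i N (x-y)} f(x-y)\, \mmd y \right| \le \int_{\R} |g_{a,b}(y)| \, |f(x-y)|\, \mmd y.
\]
The pointwise bound $|g_{a,b}| \le h_a$ then yields
\[
|g_{a,b} * (\mathcal{M}_N f)(x)| \le (h_a * |f|)(x),
\]
and since the right-hand side is independent of $b$ and $N$, the same estimate holds after taking $\sup_{b,N}$ on the left.

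Now I would take the $L^p$ norm and apply Young's convolution inequality $L^1 * L^p \to L^p$:
\[
\left\| \sup_{b,N} |g_{a,b} * (\mathcal{M}_N f)| \right\|_p \le \| h_a * |f| \|_p \le \|h_a\|_1 \, \|f\|_p.
\]
The assumption $\sup_a \|h_a\|_1 < +\infty$ immediately allows one to take the supremum over $a$ on the left, giving the claimed uniform $L^p$ bound with $C = \sup_a \|h_a\|_1$.

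There is effectively no obstacle here: the entire point of the lemma is that the $N$-supremum is handled trivially by the triangle inequality (since modulation is an $L^\infty$-isometry of modulus one), and the $b$-supremum by the crude pointwise envelope. The only mild subtlety is the measurability of the pointwise supremum $\sup_{b,N} |g_{a,b} * (\mathcal{M}_N f)|$, but this can be reduced to a countable supremum by continuity in $(b,N)$ of the convolution $g_{a,b} * (\mathcal{M}_N f)(x)$ whenever $\{g_{a,b}\}$ is continuous in $b$ in $L^1$ (or otherwise handled by standard density arguments, as per the notational convention in item (4) of Section 1.3).
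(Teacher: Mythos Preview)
Your proof is correct and follows essentially the same approach as the paper: bound the convolution pointwise by $h_a * |f|$ via the triangle inequality and the envelope hypothesis, then apply Young's inequality and take the supremum in $a$. The paper's proof is a one-line version of exactly this argument.
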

\begin{proof} By Young's convolution inequality, 
$$\|\sup_{b,N} |g_{a,b} * (M_Nf)|\|_{L^p} \le \| h_{a} * |f| \|_{L^p} \le \|h_{a}\|_1 \|f\|_{L^p} \le \left(\sup_{a} \|h_{a}\|_1\right) \|f\|_{L^p} =: C\|f\|_{L^p}.$$
\end{proof}

We will especially use it in the following form: if two families of maximally modulated operators $\mathcal{O}^{a,b}_if(x) := \sup_N |O_i^{a,b}(\mathcal{M}_Nf)(x)|,\,i=1,2$ satisfy 
\[
|\mathcal{O}^{a,b}_1f(x) - \mathcal{O}^{a,b}_2f(x)| \le |f| * h_{a}(x),
\]
with $h_a \in L^1(\R)$ as in Lemma \ref{trivial} above, then bounding $\mathcal{O}^{a,b}_1$ in $L^p$ uniformily in $a$ is equivalent to bounding $\mathcal{O}^{a,b}_2$ uniformily in $a.$ With this in mind, we rewrite our multipliers as 
\begin{align*}
m_2(2a \eta + b, \eta) = & \int_{\R} e^{i(2a \eta t + b t + \eta t^2)} \frac{\mmd t}{t} = e^{i\cdot a^2 \cdot \eta}\int_{\R} e^{i(t-a)^2\eta + ibt} \, \frac{\mmd t}{t}, \cr
\end{align*}
We further rewrite the operators $T_{2a,b}$ using Fourier inversion:
\[
T_{2a,b}f(x) = \int_{\R} f(x + a^2 - (t-a)^2) e^{ibt} \frac{\mmd t}{t}.
\]
Notice that the $a^2$ term only contributes as a translation in the $x-$variable, so we consider the simpler operators
\[ 
\tilde{T}_{2a,b}f(x) := \int_{\R} f(x - (t-a)^2) e^{ibt} \frac{\mmd t}{t}.
\]
By translating $t$ by $a$ and changing variables $s = t^2$ -- after breaking the integral into $\R_{+}$ and $\R_{-}$ parts --, we get from the observation above that bounding $\sup_{N,b}|\tilde{T}_{2a,b}(\mathcal{M}_Nf)|$ is equivalent
to bounding 
\[
A_{a}f(x) = \sup_{N,b}\left|\int_{0}^{+\infty} (\mathcal{M}_Nf)(x-s) \left(\frac{e^{ibs^{1/2}}}{s^{1/2}(s^{1/2} - a)} - \frac{e^{-ibs^{1/2}}}{s^{1/2}(s^{1/2} + a)} \right) \mmd s\right|. 
\]
\subsection{Reduction to model operators} We now look more closely into this family of operators. First, we rewrite the kernel defining $A_{a}$ as 

\begin{align*}  
\left(\frac{e^{ibs^{1/2}}}{s^{1/2}(s^{1/2} - a)} - \frac{e^{-ibs^{1/2}}}{s^{1/2}(s^{1/2} + a)} \right) & = \frac{1}{2 s^{1/2}} \cdot \frac{e^{ibs^{1/2}} - e^{-ibs^{1/2}}}{s^{1/2} + a} + \frac{a}{s^{1/2}} \cdot \frac{e^{ibs^{1/2}}}{s-a^2}  \cr
 & =: K_{1,a}^b(t) + K_{2,a}^b(t). \cr
\end{align*}

If $a =0,$ we have $K_{2,0}^b(t) = 0$, whereas $K_{1,0}^b(t)$ becomes $$\frac{e^{ibt^{1/2}} - e^{-ibt^{1/2}}}{2t}.$$ 
We write, for the time being, the maximal operator we are left with as 
\begin{equation}\label{op11}
\sup_{N,b} \left| \int_{0}^{+\infty} (\mathcal{M}_Nf)(x-t) \frac{e^{ibt^{1/2}} - e^{-ibt^{1/2}}}{2t} \, \mmd t \right|.
\end{equation}

For the $a \ne 0$ cases, we notice that $A_{-a}(f) = {A_{a}f},$ so we suppose without loss of generality that $a>0.$ We bound the kernel pointwise in a suitable neighborhood of the origin, 
and compare it to another operator away. Especifically, for $0 \le t \le a^2, $ we have 
\[
|K_{1,a}^b(t)| \le \min(1,t^{-1/2}) \cdot \frac{1}{t^{1/2} + a}.
\]
It is then easy to see that $\int_0^{a^2} \min(1,t^{-1/2}) \cdot \frac{1}{t^{1/2} + a} \mmd t$ is bounded independently of $a >0.$ By Lemma \ref{trivial}, we are left with the $t \ge a^2$ portion, where we estimate 
\[ 
\left| K_{1,a}^b(t) - \frac{e^{ibt^{1/2}} - e^{-ibt^{1/2}}}{2t} \right| \le \frac{a}{t(t^{1/2} + a)}.
\]
It is again straightforward to see that $\int_{a^2}^{\infty} \frac{a}{t(t^{1/2}+a)} \mmd t < 10, \, \forall a >0.$ We have again reduced the analysis to the operator in \eqref{op11}. 

We now address the $K_{2,a}^b$ part. We split the integral defining $K_{2,a}^b *(\mathcal{M}_Nf)$ into three regimes: $[0,+\infty)=[0,a^2/2]\cup(a^2/2,3a^2/2) \cup [3a^2/2,+\infty).$ For each of them, we have: 

\begin{itemize} 
\item $|K_{2,a}^b(t)| \le \frac{4}{a^2}$ for $t \in [0,a^2/2].$ Therefore, convolution with this part can be controlled by Lemma \ref{trivial};
\item For $t \ge 3a^2/2,$ we estimate $|K_{2,a}^b(t)| \le \frac{a}{t^{1/2}(t-a^2)},$ where a change of variables leads us to conclude that $\int_{3a^2/2}^{+\infty} \frac{a}{t^{1/2}(t-a^2)} \mmd t < 10$;
\item For the singular middle interval, we compare:
\[ 
\left|K_{2,a}^b(t) - \frac{e^{ibt^{1/2}}}{t-a^2}\right| \le \frac{1}{t^{1/2}(t^{1/2} + a)}. 
\]
By a change of variables $t \mapsto s^2$, one sees that $\int_{a^2/2}^{3a^2/2} \frac{\mmd t}{2t^{1/2}(t^{1/2} + a)} = \log \left(\frac{\sqrt{3/2} +1}{\sqrt{1/2} + 1}\right).$ By observing the operator to which we compared, we conclude that it is enough to control the $L^p$ norm of
$\sup_{N,b} \left| \int_{-a^2/2}^{a^2/2} f(x-t) e^{iNt} e^{ib\sqrt{t+a^2}} \frac{\mmd t}{t} \right|$ independently of the parameter $a.$ changing variables, it is easy to see that this expression equals $\mathfrak{C}^{1/2}(f_a)(x/a^2),$ where $f_{a^2}(y) = f(a^2y).$ If Theorem \ref{main} holds, 
then the $L^p$ norm of this expression is bounded independently of $a.$ 
\end{itemize}

In order to finish the proof of Proposition \ref{operss}, we must conclude boundedness of the operator 
\[ 
\sup_{N,b} \left| \int_{0}^{+\infty} (\mathcal{M}_Nf)(x-t) \frac{e^{ibt^{1/2}} - e^{-ibt^{1/2}}}{2t} \, \mmd t \right|
\]
given Theorem \ref{main}. In fact, we first need an auxiliary result: 

\begin{prop}\label{consequence} Suppose Theorem \ref{main} holds. Then the maximal functions 
\[
f \mapsto \sup_{N,b} \left| \int_{\R} f(x-t) e^{iNt}e^{ib[t]^{1/2}} \, \frac{\mmd t}{t} \right|
\]
are both bounded in $L^p.$ 
\end{prop}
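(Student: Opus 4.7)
The plan is to obtain Proposition \ref{consequence} as a scaling-and-limit corollary of Theorem \ref{main}. Applying Theorem \ref{main} to the rescaled function $f_\lambda(y) := f(y/\lambda)$ (with $\lambda > 0$) and substituting $t = \lambda s$ inside the integral defining $\mathfrak{C}^R f_\lambda$, then using $[\lambda s + 1]^{1/2} = \lambda^{1/2}[s + 1/\lambda]^{1/2}$ to absorb $N\lambda \mapsto N$ and $b\lambda^{1/2} \mapsto b$ into the supremum, one gets, uniformly in $d, R' > 0$,
\[
\left\| \sup_{N,b \in \R} \left| \int_{-R'}^{R'} g(y-s) \, e^{iNs} e^{ib[s+d]^{1/2}} \, \frac{\mmd s}{s} \right| \right\|_{L^p} \le C_p \|g\|_{L^p},
\]
where $d = 1/\lambda$ and $R' = R/\lambda$ (both free to roam over $(0,\infty)$). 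In other words, the bound in Theorem \ref{main} self-improves, through the parabolic scaling $y \mapsto y/\lambda,\, N \mapsto N\lambda,\, b \mapsto b\lambda^{1/2}$, to an analogous bound with the fixed shift $[t+1]^{1/2}$ replaced by $[t+d]^{1/2}$ for any $d > 0$.

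The second step is to let $d \to 0^+$ and $R' \to \infty$ to recover the unshifted phase $[s]^{1/2}$ on all of $\R$. For $g \in \mathcal{S}(\R)$ and any fixed triple $(N,b,y)$, one rewrites the truncated principal-value integral in regularised form
\[
F_{R',d}(N,b;y) = \int_{-R'}^{R'} \bigl[ g(y-s) e^{iNs} e^{ib[s+d]^{1/2}} - g(y) e^{ib[d]^{1/2}} \bigr] \frac{\mmd s}{s}
\]
and invokes dominated convergence: the integrand is pointwise dominated, uniformly in $d > 0$, by a locally integrable function, bounded near infinity by the Schwartz decay of $g$ and near zero by $C_{N,b,g}(1 + |s|^{-1/2})$, thanks to the estimate $|[s+d]^{1/2} - [d]^{1/2}| \lesssim |s|^{1/2}$ valid for all $d > 0$. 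Combining the resulting pointwise convergence $F_{R',d}(N,b;y) \to F(N,b;y)$ with the elementary inequality $\sup_{N,b} |F| \le \liminf_{R'\to\infty,\,d \to 0^+} \sup_{N,b} |F_{R',d}|$ and Fatou's lemma produces the required $L^p$ bound. An identical argument handles the other sign convention for $[\cdot]^{1/2}$, since the uniform estimate above is insensitive to it.

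The main technical point to verify is that the bound $|[s+d]^{1/2} - [d]^{1/2}| \lesssim |s|^{1/2}$ really is uniform in $d > 0$ and in the sign convention; this requires a short case analysis according to whether $|s| \le d$ or $|s| > d$ and whether $s+d$ has the same sign as $d$. The branch point $s = -d$ of the phase $s \mapsto [s+d]^{1/2}$ moves toward zero as $d \to 0^+$, but remains strictly away from zero for each $d > 0$, so the principal value at $s = 0$ is well-defined for each $d > 0$ and is controlled uniformly — this is the only place where one must be mildly careful in order to apply dominated convergence legitimately.
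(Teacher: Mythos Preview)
Your proposal is correct and follows essentially the same approach as the paper's own proof: both exploit the dilation symmetry $t\mapsto \lambda t$ to upgrade the uniform bound of Theorem~\ref{main} to one with phase $[t+d]^{1/2}$ for arbitrary $d>0$, and then pass to the limit $d\to 0^+$ via dominated convergence pointwise and Fatou's lemma in $L^p$. Your write-up is in fact more careful than the paper's about the principal-value regularisation and the uniform Hölder-type estimate $|[s+d]^{1/2}-[d]^{1/2}|\lesssim |s|^{1/2}$, which the paper leaves implicit.
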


\begin{proof} We consider $\mathfrak{C}^{+\infty} := \mathfrak{C}$ as in Theorem \ref{main}. If we define 
\[
\mathfrak{C}_af(x) = \sup_{N,b} \left| \int_{\R} f(x-t) e^{iNt} e^{ib[t+a]^{1/2}} \, \frac{\mmd t}{t}\right|,
\]
the dilation symmetries of $\mathfrak{C}$ imply that $\mathfrak{C}f(x) = \mathfrak{C}_a(f_{1/a})(ax),$ where $f_{1/a}(y) = f(y/a).$ This plainly implies 
\begin{equation}\label{unifff}
\| \mathfrak{C}_af\|_p = a^{1/p} \|\mathfrak{C}(f_{a})\|_p \lesssim a^{1/p} \|f_{a}\|_p = \|f\|_p.
\end{equation}
Now, a direct computation together with dominated convergence shows that 
\[
\liminf_{a \to 0} \mathfrak{C}_af(x) \ge \sup_{N,b} \left| \int_{\R} f(x-t) e^{iNt}e^{ib[t]^{1/2}} \, \frac{\mmd t}{t} \right|
\]
for all smooth $f$ with compact support. The proposition is then proved by Fatou's lemma and \eqref{unifff}. 
\end{proof}

We rewrite the operators from Proposition \ref{consequence} as 
\[
\sup_{N,b} \left| \int_{0}^{+\infty} ((\mathcal{M}_Nf)(x-t) e^{ib[t]^{1/2}} - (\mathcal{M}_Nf)(x+t) e^{ib[-t]^{1/2}}) \, \frac{\mmd t}{t} \right|
\]
If $[u]^{1/2} = \text{sign}(u) |u|^{1/2}$, the integrand equals 
$$(\mathcal{M}_Nf)(x-t) e^{ibt^{1/2}} - (\mathcal{M}_Nf)(x+t) e^{-ibt^{1/2}},$$
and it becomes $((\mathcal{M}_Nf)(x-t) - (\mathcal{M}_Nf)(x+t)) e^{ibt^{1/2}}$ in case $[u]^{1/2} = |u|^{1/2}.$ Notice now that 
\begin{align*}
(\mathcal{M}_Nf)(x-t) \cdot (e^{ibt^{1/2}} - e^{-ibt^{1/2}}) & = (\mathcal{M}_Nf)(x-t) e^{ibt^{1/2}} - (\mathcal{M}_Nf)(x+t) e^{-ibt^{1/2}} \cr 
 & - ((\mathcal{M}_Nf)(x-t) - (\mathcal{M}_Nf)(x+t)) e^{-ibt^{1/2}},\cr
\end{align*}
so that the operator from \eqref{op11} is bounded by the sum of the two in Proposition \ref{consequence}. This finishes the reduction to Theorem \ref{main}.

\section{Proof of Theorem \ref{main}}\label{modeluni} 

In order to deal with the operators $\mathfrak{C}^R,$ we use the Kolmogorov-Seliverstov linearization method. In fact, by suitably choosing, we find two functions $b,N: \R \to \R_{+}$, taking on only finitely many values, so that 
\[
|\mathfrak{C}_{b,N}^R f(x)| = \left| \int_{-R}^{R} f(x-t) e^{ib(x)[t+1]^{1/2}} e^{i N(x) t} \frac{\mmd t}{t} \right| \ge \frac{1}{2} \mathfrak{C}^Rf(x). 
\]
Our goal is to bound this operator \emph{independently} of both $b$ and $N$, as well as $R>0$. We omit therefore $b,N,R$ in order to clean up notation. The first step is to split the analysis of this operator into two parts:\\ 
\begin{align*}
|\mathfrak{C} f(x)| & \le 1_{\{b(x) \le 10\}} \mathfrak{C}f(x) + 1_{\{b(x) > 10\}} \mathfrak{C}f(x) \cr 
 & =: \mathfrak{C}^1f(x) + \mathfrak{C}^2f(x).
\end{align*}

\noindent\textbf{Part 1: Analysis of $\mathfrak{C}^1$.} We split the interval $[-R,R]$ of the integral defining $\mathfrak{C}^1$ as 
\[
[-R,-\min\{R,b(x)^{-2}\}] \cup (-\min\{R,b(x)^{-2}\},\min\{R,b(x)^{-2}\}) \cup [\min\{R,b(x)^{-2}\},R].
\]
In the middle interval, the aim is to simply approximate the phase $b(x)[t+1]^{1/2}$ by $b(x).$ In more effective terms, the difference
\[
\left|\int_{-\min\{R,b(x)^{-2}\}}^{\min\{R,b(x)^{-2}\}} f(x-t) e^{iN(x)t} e^{ib(x)[t+1]^{1/2}} \, \frac{\mmd t}{t} - e^{ib(x)} \int_{-\min\{R,b(x)^{-2}\}}^{\min\{R,b(x)^{-2}\}} f(x-t) e^{iN(x)t} \, \frac{\mmd t}{t} \right|
\]
is bounded pointwise by 
\[
b(x) \int_{-\min\{R,b(x)^{-2}\}}^{\min\{R,b(x)^{-2}\}} |f(x-t)||[t+1]^{1/2} -1| \, \frac{\mmd t}{|t|}. 
\]
Notice that the difference $h(t) = |[t+1]^{1/2} - 1|$ satisfies that 
$$h(t) \le \begin{cases} 4|t|, & \text{if } t \in [-1/2,2]; \\ 4, & \text{if } t \in [-2,-1/2]; \\ 4|t|^{1/2}, & \text{if } |t| \ge 2. \end{cases}$$
The function $h(t)/|t|$ admits then a radial majorizer $H(t)$ whose integral is at most a multiple of $\min\{R,b(x)^{-2}\}^{1/2}.$ Because of the multiplying $b(x)$ factor in front, 
this integral is pointwise bounded by an absolute constant times the Hardy-Littlewood maximal function of $f$ at the point $x.$ It is well-known (cf. Grafakos \cite[Section~6.3]{grafakosmodern}) 
that the maximally truncated version of the Carleson operator is bounded in $L^p.$ The $L^p$ boundedness for $\mathfrak{C}^1$ restricted to this middle interval then follows 
from $L^p$ boundedness of the maximal function. \\ 

For the two outer intervals, the main idea is to use the $TT^*$ method to get summable decay in the scales. This is a crucial idea in this argument, and this will be emphasized by its incidence in this section. \\

Namely, we suppose that $R> b(x)^{-2}$, as this part of the analysis gets trivialized in case $R \le b(x)^{-2}.$ Let $\psi_0 : \R \to \R$ be a positive, smooth bump function supported in $[1/2,2]$ such that 
\[
\sum_{j \in \Z} \psi_0 \left( \frac{\xi}{2^j}\right) \equiv 1, \, \forall \xi \in \R \setminus \{0\}.
\]
We analyze the integral only over the interval $[b(x)^{-2},R],$ as the other part the analysis is entirely analogous. By a computation analogous to the one performed above to control the middle interval, 
we obtain that the integral defining this operator over the interval $[b(x)^{-2},R]$ is, modulo error terms amounting to maximal function, equal to 
\begin{equation}\label{decomp1}
\sum_{j \ge 0} \int_{\R} f(x-t) e^{iN(x)t} e^{ib(x)[t+1]^{1/2}} \psi_0(2^{-j}b(x)^2 t) \, \frac{\mmd t}{t} =: \sum_{j \ge 0} \mathfrak{S}^jf(x). 
\end{equation}
Some remarks are in order about the operators $\mathfrak{S}^j.$ First of all, these operators are pointwise bounded by an absolute constant times the Hardy--Littlewood maximal function, due to the space localization $1/2 \le 2^{-j} b(x)^2 t \le 2$ 
imposed in the integral. Therefore, we immediately get 
\[
\|\mathfrak{S}^jf\|_{1,+\infty} \lesssim \|f\|_1, \, \, \|\mathfrak{S}^jf\|_{\infty} \lesssim \|f\|_{\infty}.
\]
In order to conclude bounds on the sum in \eqref{decomp1}, it suffices to prove that $\|\mathfrak{S}^jf\|_{2} \lesssim 2^{-\tau j} \|f\|_2,$ for some $\tau >0.$ Indeed, by interpolating with the 
endpoint estimates above, we obtain that there is $\tau_p > 0$ such that 
\[
\|\mathfrak{S}^j f\|_p \lesssim_p 2^{-\tau_p j} \|f\|_p, \, \forall p \in (1,+\infty).
\]
Finally, the $L^p-$norm of the expression in \eqref{decomp1} is controlled, by triangle inequality, by 
\[
\sum_{j \ge 0} \|\mathfrak{S}^jf\|_p \lesssim_p \sum_{j\ge 0} 2^{-\tau_p j} \|f\|_p \lesssim_p \|f\|_p.
\]
We focus hence on the extra decay for the $L^2$ bounds. That is the content of the following proposition. 

\begin{prop}\label{propdec} Let $\mathfrak{S}^j$ be defined as above. It holds that
\[
\|\mathfrak{S}^jf\|_2 \lesssim 2^{-j/200} \|f\|_2,
\]
for all $f \in L^2(\R).$
\end{prop}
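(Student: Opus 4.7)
The plan is to establish the $L^2$ decay of $\mathfrak{S}^j$ via the $TT^*$ method, exploiting the rapid oscillation of the square-root phase across the window $t \sim 2^j b(x)^{-2}$, on which $b(x)[t+1]^{1/2}$ oscillates $\sim 2^{j/2}$ times while $b \le 10$ remains bounded.

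First I would compute the kernel of $\mathfrak{S}^j (\mathfrak{S}^j)^*$: writing $u := x - y$, one obtains
$$K(x,y) = e^{-iN(y) u} \int_{\R} e^{i\Phi(t)} \frac{\psi_0(2^{-j} b(x)^2 t) \,\psi_0(2^{-j} b(y)^2 (t-u))}{t(t-u)} \, \mmd t,$$
where $\Phi(t) = (N(x) - N(y)) t + b(x)[t+1]^{1/2} - b(y)[t-u+1]^{1/2}$, and the integrand is supported where $t \sim 2^j b(x)^{-2}$ and $t - u \sim 2^j b(y)^{-2}$; in particular $|u| \lesssim 2^j b(x)^{-2}$. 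I would then perform the change of variables $s = [t+1]^{1/2}$ (so $s \sim 2^{j/2}/b(x)$), which linearizes the square-root contribution and puts the $t$-integral into standard stationary-phase form.

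The core of the argument is an application of van der Corput's lemma. A direct computation gives
$$\partial_s^2 \Phi = 2(N(x) - N(y)) + \frac{b(y)\, u}{[s^2 - u]^{3/2}},$$
with the second term of magnitude $\sim b^4 |u| / 2^{3j/2}$ on the support. In the generic regime where this is bounded below by $b^4/2^{3j/2}$, van der Corput combined with the amplitude size $\|a\|_\infty \lesssim b^4/2^{2j}$ yields $|K(x,y)| \lesssim b^2/2^{5j/4}$; integrating in $y$ over the support $|u| \lesssim 2^j b^{-2}$ and applying the Schur test produces $\|\mathfrak{S}^j(\mathfrak{S}^j)^*\|_{L^2 \to L^2} \lesssim 2^{-j/4}$, whence $\|\mathfrak{S}^j\|_{L^2\to L^2} \lesssim 2^{-j/8}$, comfortably exceeding the required $2^{-j/200}$.

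The main obstacle is the \emph{degenerate} regime in which both $|N(x) - N(y)|$ and $|u|$ are small enough that $\partial_s^2 \Phi$ may vanish. I would handle this by a dyadic case split in $|N(x) - N(y)|$ and $|u|$ measured against the natural threshold $b^4/2^{3j/2}$: when the second-derivative test fails, either the first-derivative test applies (the linear term contributes $2(N(x) - N(y)) s$ to $\partial_s \Phi$, of size $|N(x) - N(y)| \cdot 2^{j/2}/b$ on the support), or one exploits the third derivative $\partial_s^3 \Phi \sim b^5 u/2^{2j}$ via a higher-order van der Corput estimate, or a trivial kernel bound combined with the small measure of the degenerate set suffices. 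The quantitative losses from summing these dyadic contributions, together with the slack afforded by the Schur step, are what account for the modest exponent $1/200$; balancing the losses across cases so that a uniform summable bound survives is where the main technical work lies.
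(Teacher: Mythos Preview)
Your $TT^*$ framework and the split into a small-shift regime (trivial bound) versus a large-shift regime (oscillatory decay) are exactly the paper's strategy. The gap is in the oscillatory estimate itself. Two issues compound. First, you conflate $b(x)$ and $b(y)$ throughout, writing a single ``$b$''; these are values of an arbitrary measurable function on $(0,10]$ and need not be comparable. Second, and more seriously, your substitution $s=[t+1]^{1/2}$ linearizes the $b(x)$-square-root, so the only curvature left in $\Phi$ comes from $b(y)[s^2-u]^{1/2}$. Every derivative $\partial_s^k\Phi$ for $k\ge 3$ is then proportional to $b(y)$, which may be arbitrarily small, while $\partial_s^2\Phi$ still carries the uncontrolled term $2(N(x)-N(y))$. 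In the regime $b(y)\ll b(x)$ none of your listed fallbacks (first derivative, third derivative, trivial bound on a thin set) yields a bound uniform in $b(y)$, and your write-up does not show how to close this case --- it is precisely the point where the real work lies.

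The paper avoids both problems by \emph{not} linearizing. It rescales $s'=2^{-j}b(x)^2 s$ and $\xi'=2^{-j}b(y)^2\xi$, so the phase reads $\varphi(s')=vs'+R_{\xi',j,b}(s')$ with $v$ encoding $N(x)-N(y)$ and $R$ retaining \emph{both} square roots. Then $\varphi''=R''$ and $\varphi'''=R'''$ are simultaneously $N$-free. The vector $\bigl(\varphi'',-\tfrac{2}{3}\varphi'''\bigr)$ is written as $2^{j/2}\mathcal{M}(s')\,\mathcal{V}(s')$ for an explicit $2\times 2$ matrix $\mathcal{M}$ and vector $\mathcal{V}$; a one-line linear-algebra lemma, $|Ax|\ge|\det A|\,\|A\|^{1-n}|x|$, then gives $|(\varphi'',\varphi''')|\gtrsim 2^{j/3}$ whenever $|\xi'|\ge 2^{-j/100}$, uniformly in $h=(b(y)/b(x))^2\in(0,1]$. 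The uniformity in $h$ holds because one entry of $\mathcal{V}$ comes from the $b(x)$-square-root alone and is bounded below independently of $b(y)$. A single van der Corput application (order two or three) finishes, with no case split on $N$ and no separate treatment of small $b(y)$; combined with the trivial bound on $|\xi'|\le 2^{-j/100}$ this gives $\|\mathfrak{S}^j(\mathfrak{S}^j)^*\|\lesssim 2^{-j/100}$ and hence the claimed exponent $1/200$.
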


\begin{proof} We first write the operator $\mathfrak{S}^jf(x) = S^j_{N(x),b(x)} * f(x),$ where we define 
\[
S^j_{N(x),b(x)}(s) = e^{iN(x)s} e^{ib(x)[s+1]^{1/2}} \psi_0(2^{-j}b(x)^2s) \, \frac{1}{s}. 
\]
Now, in order to compute the $L^2$ norm of $\mathfrak{S}^j,$ we compute instead its composition with its adjoint. It admits an expansion as 
\[
\mathfrak{S}^j (\mathfrak{S}^j)^* f(x) = \int_{\R} (S^j_{N(x),b(x)}*\tilde{S}^j_{N(y),b(y)})(x-y) f(y) \, \mmd y.
\]
Here, we let $\tilde{S}^j_{N(x),b(x)}(z) = \overline{S^j}_{N(x),b(x)}(-z).$ A computation shows, on the other hand, that $| (S^j_{N(x),b(x)}*\tilde{S}^j_{N(y),b(y)})(\xi)|$ equals 
\[
\left| \int_{\R} e^{i(N(x)-N(y))s} e^{i(b(x)[s+1]^{1/2} - b(y)[s-\xi + 1]^{1/2})} \frac{\psi_0(2^{-j}b(x)^2s)}{s} \frac{\psi_0(2^{-j}b(y)^2(s-\xi))}{s-\xi} \, \mmd s\right|.
\]
We change variables in this last integral to simplify the analysis. Effectively, assume, without loss of generality, that $b(y) < b(x).$  We let $s' = 2^{-j} b(x)^2 s,$ and denote $\xi' = 2^{-j}b(y)^2 \xi.$ The integral whose absolute value we would 
like to estimate rewrites then as 
\[
\frac{b(y)^2}{2^j} \int_{\R} e^{i2^jb(x)^{-2}(N(x)-N(y))s'} e^{iR_{\xi',j,b}(s')} \frac{\psi_0(s')}{s'} \frac{\psi_0(hs'- \xi')}{hs' - \xi'} \, \mmd s',
\]
where we let $h := \frac{b(y)^2}{b(x)^2} < 1,$ and consider the phase function given by 
\[
R_{\xi',j,b}(s') = 2^{j/2} ([s' + 2^{-j} b(x)^2]^{1/2} - [hs' - \xi' + 2^{-j} b(y)^2]^{1/2}).
\]
This is the oscillatory integral we would like to estimate. The following lemma is the tool to directly do it. 
\begin{lemma}\label{oscilest0} Let $\Psi: \R \times \R \to \R$ be a smooth function supported in $\{ (s',\xi') \in \R^2 \colon s', hs' - \xi' \in [1/2,2]\},$ for some fixed positive parameter $h \le 1.$ It holds that, for all $v, \xi' \in \R$ and $j \ge 0,$  
\begin{align}\label{oscil0}
 & \left| \int_{\R} e^{i v s'} \cdot e^{i R_{\xi',j,b}(s')}  \Psi(\xi',s') \mmd s' \right|   \cr
 \lesssim \sup_{\xi' \in [-2,2]} \|\Psi(\xi',\cdot)\|_{C^2} & \left( 1_{[-2^{-j/100},2^{-j/100}]}(\xi') +  2^{-\frac{j}{9}} 1_{[-4,4]}(\xi')\right), \cr
\end{align}
where the implicit constant does not depend on $h \in (0,1].$ 
\end{lemma}

\begin{proof} The proof follows the essential principle that, for $\xi'$ small enough, we cannot expect much more from the integral than the trivial triangle inequality bound, and if $\xi'$ is non-small, the oscillation of the phase 
$R_{\xi',j,b}$ starts providing cancellation, and therefore decay. In fact, if $|\xi'| \le 2^{-j/100},$ we use triangle inequality as pointed out, and one readily obtains the first term on the right 
hand side of the statement. \\ 

If, nonetheless, $|\xi'| \ge 2^{-j/100},$ we have to prove some sort of lower bound on the derivatives of the (completed) phase 
\[
\varphi(s') = vs' + R_{\xi',j,b}(s').
\]
For that purpose, we consider the vector 
\[
\mathcal{Q}(s') = \begin{pmatrix} \varphi''(s') \\ -\frac{2}{3} \varphi'''(s') \end{pmatrix}
\]
of second and third derivatives of the phase. The aim is to prove this is bounded from below by some positive power of $2^j$, so that stationary phase considerations give us the desired decay. In order to prove this bound, we adopt a strategy already present in 
\cite{GPRY} and \cite{GHLR}. Namely, the idea used multiple times there is to rewrite the vector $\mathcal{Q}$ as a certain (invertible) matrix applied at a vector. If we prove sufficiently good bounds on the determinant and norms of the objects 
involved, we should get good enough bounds on the original $\mathcal{Q}.$ A more precise version of this principle is the following Lemma. 

\begin{lemma}\label{matrix} Let $A$ be a $n \times n$ invertible matrix. It holds that 
\[
|A \cdot x| \ge |\det (A)| \|A\|^{1-n} \cdot |x|.
\]
\end{lemma} 

\begin{proof} Assume, by homogeneity, that $\|A\|=1.$ Let us show first that $\|A^{-1}\| \le \frac{1}{\det(A)}.$  It is simple to see that the eigenvalues of $AA^{*}$ are all contained in $[0,1]$. If $\lambda(A)$ is the smallest
eigenvalue of $AA^{*}$, it holds that $1 \ge \lambda(A) \ge \det(AA^{*}).$ On the other hand, 
\[
\|A^{-1}\| = \sup_{\|v\| = 1} \langle A^{-1}v,A^{-1}v \rangle ^{1/2} \le \sup_{\|v\|=1} \langle v, (A^*)^{-1} A^{-1} v \rangle ^{1/2}  \le \lambda(A)^{-1/2}.
\]
Both imply that $\|A^{-1}\| \le \frac{1}{|\det(AA^*)|^{1/2}} \le \frac{1}{|\det(A)|},$ which implies our first claim. Now, we simply write 
\[
|x| = |A^{-1} \cdot A \cdot x| \le \|A^{-1}\| \cdot |A \cdot x| \le \|A\|^{n-1} \cdot |\det(A)|^{-1} \cdot |A \cdot x|,
\]
in order to conclude the proof.
\end{proof}
With this Lemma in hands, we simply need to notice that $\mathcal{Q}(s')$ equals 
\[
2^{j/2} \begin{pmatrix} 1 & 1 \\ (s' + 2^{-j}b(x)^2)^{-1} & h (hs' - \xi' + 2^{-j}b(y)^2)^{-1} \end{pmatrix} \cdot \begin{pmatrix} (s' + 2^{-j}b(x)^2)^{-1/4} \\ h^2 (hs' - \xi' + 2^{-j}b(y)^2)^{-1/4} \end{pmatrix}. 
\] 
\[
=: 2^{j/2} \mathcal{M}(s') \cdot \mathcal{V}(s').
\]
By the fact that $j \ge 0, b(x) \le 10,$ we see that $\|\mathcal{M}(s')\| \lesssim 1,$ as well as 
$$|\det(\mathcal{M}(s'))| \gtrsim \frac{|\xi'|}{|s' + 2^{-j}b(x)^2||hs' - \xi' + 2^{-j}b(x)^2|} \gtrsim 2^{-j/100}$$ 
and $|\mathcal{V}(s')| \gtrsim 1.$ Lemma \ref{matrix} gives that 
\[
|\mathcal{Q}(s')| \gtrsim 2^{j/3}. 
\]
Therefore, there is $i \in \{2,3\}$ so that $|\varphi^{(i)}(s')| \gtrsim 2^{j/3}.$ By Proposition 2 in Chapter VIII of \cite{Stein1}, we get that, for $|\xi'| \gtrsim 2^{-j/100},$ the integral from the Lemma is bounded by a multiple of 
\[
 \| \Psi(\xi',\cdot)\|_{C^2} \cdot 2^{-j/9}.
\]
This gives the second summand in the statement of Lemma \ref{oscilest0}, and therefore finishes the proof. 
\end{proof}
By Lemma \ref{oscilest0}, we obtain that $| (S^j_{N(x),b(x)}*\tilde{S}^j_{N(y),b(y)})(\xi)|$ is bounded by an absolute constant times
\[
 \frac{\mu(x,y)^2}{2^j} \left(1_{(-2^{-j/100},2^{-j/100})}\left(\frac{\mu(x,y)^2 \xi}{2^j}\right) + 2^{-j/9} 1_{(-4,4)}\left(\frac{\mu(x,y)^2 \xi}{2^j}\right) \right),
\]
where $\mu(x,y) = \min\{b(x),b(y)\}.$ Substituting into the formula of $\mathfrak{S}^j(\mathfrak{S}^j)^*f$, we obtain that for any $g \in L^2(\R),$ 
\[
|\langle \mathfrak{S}^j (\mathfrak{S}^j)^* f, g \rangle| \lesssim (2^{-j/100} + 2^{-j/9}) \cdot \left( \int_{\R} Mf(x) \cdot |g(x)| \mmd x + \int_{\R} Mg(x) |f(x)| \, \mmd x \right). 
\]
By $L^2$ boundedness of the maximal function, this is less than an absolute constant times $2^{-j/100}\|f\|_2 \|g\|_2.$ As a consequence, it follows that 
\[
\| \mathfrak{S}^j (\mathfrak{S}^j)^* f\|_2 \lesssim 2^{-j/100} \|f\|_2.
\]
Therefore, 
\[
\| \mathfrak{S}^j f\|_2 \lesssim 2^{-j/200} \|f\|_2.
\]
This finishes the proof of the proposition.
\end{proof}

\noindent\textbf{Part 2: Analysis of $\mathfrak{C}^2.$} For this part, we need  a slightly more sophisticated approximation to the phase function. We split the interval of integration as 
$$[-R,-2] \cup (-2,-1/2] \cup (-1/2,-b(x)^{-\frac{1}{6}}) \cup [-b(x)^{-\frac{1}{6}},b(x)^{-\frac{1}{6}}] \cup (b(x)^{-\frac{1}{6}},1/2) \cup [1/2,R].$$
We define the approximation polynomial 
$$P_{b(x)}(t) = b(x) \cdot \left( 1 + \frac{t}{2} - \frac{t^2}{4} + \frac{3t^3}{8} - \frac{15t^4}{16} + \frac{105t^5}{32}\right),$$
and note that 
\[
|b(x)[t+1]^{1/2} - P_{b(x)}(t)| \lesssim b(x) \cdot t^6
\]
for $t \in [-1/2,1/2].$ This follows directly by noting that $P_{b(x)}$ is nothing but the Taylor polynomial of order $5$ for the function $\sqrt{t+1}$ around the origin. 
Using this fact, we compare the integral defining our operator restricted to the middle interval:
\begin{equation}\label{compar}
\left| \int_{-b(x)^{-1/6}}^{b(x)^{-1/6}} f(x-t) e^{iN(x)t} e^{ib(x)[t+1]^{1/2}} \, \frac{\mmd t}{t} - \int_{-b(x)^{-1/6}}^{b(x)^{-1/6}} f(x-t) e^{iN(x)t} e^{iP_{b(x)}(t)} \, \frac{\mmd t}{t}\right| 
\end{equation}
is bounded by 
\[
C b(x) \cdot \int_{-b(x)^{-1/6}}^{b(x)^{-1/6}} |f(x-t)| t^5 \, \mmd t \le C b(x)^{1/6} \int_{-b(x)^{-1/6}}^{b(x)^{-1/6}} |f(x-t)| \, \mmd t \lesssim Mf(x).
\]
We have to resort to the full version of the polynomial Carleson theorem in \cite{Lie2} and \cite{pavel} to bound the second integral in \eqref{compar} in $L^p$: the operator to which we compared is bounded by a maximally truncated polynomial Carleson operator 
of degree $\le 5,$ which is $L^p-$bounded by the aforementioned references. As the difference is bounded in $L^p$, the analysis for $[-b(x)^{-1/6},b(x)^{-1/6}]$ is complete. \\

It remains to bound the operators relative to the integral over the ``outer" layers. We first begin by analyzing the outermost intervals
$$(-1/2,-b(x)^{-1/6}) \cup (b(x)^{-1/6},1/2).$$
As the proof for both of them is essentially the same, we focus on the positive interval $(b(x)^{-1/6},1/2).$ \\ 

Let $\psi_0$ be a smooth bump with the properties as in Part 1. Up to a maximal function error, bounding the integral defining $\mathfrak{C}^2$ over the interval $(b(x)^{-1/6},1/2)$ 
is equivalent to bounding 
\[
\tilde{\mathfrak{C}}f(x):=\int f(x-t) e^{iN(x)t} e^{ib(x)[t+1]^{1/2}} \cdot \phi_{b(x)}(t)\,  \frac{\mmd t}{t},
\]
where $\phi_{b(x)}(t) = \phi_{\lfloor \log_2 b(x) \rfloor}(t) := \sum_{j = (2-\frac{1}{6})\lfloor \log_2 b(x) \rfloor}^{2  \lfloor \log_2 b(x) \rfloor - 3} \psi_0(2^{-j}2^{2 \lfloor \log_2 b(x) \rfloor} t).$ This holds by the fact that the smooth cutoff function 
$\phi_{b(x)}$ approximates well the characteristic function of $(b(x)^{-1/6},1/2)$ due to the properties of $\psi_0.$ \\

Our main goal now is to achieve exponential decay in $\lfloor \log_2 b(x) \rfloor$. This will be enough for our purposes, as the operator $\tilde{C}$ behaves well in the sets where $b \sim 2^k.$ Explicitly, we compute: 
\begin{align*}
|\tilde{\mathfrak{C}}f(x)| & \le \left(\sum_{k \ge 3}  \left| 1_{b_k(x) \in (1,2]} \int f(x-t)e^{iN(x)t} e^{i2^k b_k(x)[t+1]^{1/2}} \phi_k(t) \, \frac{\mmd t}{t} \right|^p \right)^{1/p} \cr
 & =: \left( \sum_{k \ge 3} |\mathfrak{C}_{k}f(x)|^p\right)^{1/p}, \cr 
\end{align*}
where $b_k(x) = \frac{b(x)}{2^k}$. It suffices then to bound $\|\mathfrak{C}_{k}f\|_p \lesssim k\cdot  2^{- \alpha_p \cdot k} \|f\|_p,$ with $\alpha_p > 0.$ In order to obtain 
that bound, we decompose each of the $\mathfrak{C}_{k}$ further as 
\[
\mathfrak{C}_{k}f(x) = \sum_{j= \frac{11 k}{6}}^{2k} \mathfrak{C}_{k}^jf(x),
\]
where
\[
\mathfrak{C}_{k}^jf(x) := 1_{b_k(x) \in (1,2]} \int f(x-t)e^{iN(x)t} e^{i2^k b_k(x)[t+1]^{1/2}} \psi_0(2^{j-2k} t) \, \frac{\mmd t}{t}.
\]
Our main Proposition to get decay in $j$ for these operators reads as follows. 

\begin{prop}\label{doubledecay} There exists $\beta > 0$ such that for all $k \ge 1$ and all $j \in ((2-\frac{1}{6})k,2k),$
\[
\|\mathfrak{C}_{k}^j f\|_2 \lesssim \left( 2^{-\beta j} + 2^{\beta \cdot( \frac{7}{3}k - \frac{399}{300} j)} \right) \|f\|_2. 
\]
\end{prop}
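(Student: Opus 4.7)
The strategy is a $TT^*$ argument at the natural scale $\ell := 2^{j-2k}$ imposed by the cut-off $\psi_0(2^{2k-j} t)$ (I read the exponent in $\psi_0$ as localising $t$ to $t \sim \ell$, consistent with the definition of $\phi_{b(x)}$). Writing $\mathfrak{C}_k^j f(x) = \int S_x(t)\, f(x-t)\, \mmd t$ for the obvious $x$-dependent kernel $S_x$, Schur's test reduces the task to bounding $\sup_x \int |(S_x * \tilde{S}_y)(x-y)|\, \mmd y$. After the change of variables $t = \ell u$, $x - y = \ell v$, the convolved kernel becomes $\mathcal{K}(x,y) = \ell^{-1} \int \Psi_v(u)\, e^{i\phi(u)}\, \mmd u$, where $\Psi_v(u) := \psi_0(u)\psi_0(u-v)/(u(u-v))$ has uniformly bounded $C^2$-norm (both $u$ and $u - v$ lie in $[1/2, 2]$ on the support, which also confines $v$ to a bounded set), and
\[
\phi(u) = \ell(N(x)-N(y))\, u + 2^k\bigl(b_k(x)[\ell u + 1]^{1/2} - b_k(y)[\ell(u-v)+1]^{1/2}\bigr).
\]

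The heart of the argument is a matrix factorisation analogous to the one in the proof of Lemma \ref{oscilest0}. Setting $a := [\ell u + 1]^{-1/2}$ and $a' := [\ell(u-v)+1]^{-1/2}$, both of order one since $\ell \le 1/2$, a direct computation gives
\[
\begin{pmatrix} \phi''(u) \\ -\tfrac{2}{3}\phi'''(u) \end{pmatrix} = -\frac{2^k \ell^2}{4} \begin{pmatrix} 1 & -1 \\ \ell a^2 & -\ell a'^2 \end{pmatrix} \begin{pmatrix} b_k(x)\, a^3 \\ b_k(y)\, a'^3 \end{pmatrix}.
\]
The matrix has determinant $\ell(a^2 - a'^2) = -\ell^2 v/((\ell u + 1)(\ell(u-v)+1))$ of magnitude $\sim \ell^2 |v|$, operator norm $O(1)$, and the vector has norm bounded below since $b_k(x), b_k(y) \in (1,2]$ and $a, a' \gtrsim 1$. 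Lemma \ref{matrix} then yields $|\phi''(u)| + |\phi'''(u)| \gtrsim 2^k \ell^4 |v| = 2^{4j - 7k}|v|$, and Stein's Proposition 2 in \cite[Ch.~VIII]{Stein1} produces $\bigl|\int \Psi_v\, e^{i\phi}\, \mmd u\bigr| \lesssim (2^{4j-7k}|v|)^{-1/3}$ on the range where this quantity is below $1$.

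I finish by splitting the $v$-integral at a threshold $|v_0| = 2^{-2\beta j}$. For $|v| \le |v_0|$ I use the trivial bound $\bigl|\int \Psi_v\, e^{i\phi}\, \mmd u\bigr| \lesssim 1$, which contributes $\lesssim 2^{-2\beta j}$ to $\int |\mathcal{K}(x,y)|\, \mmd y$; for $|v| \ge |v_0|$, the van der Corput estimate contributes at most $\int (2^{4j-7k}|v|)^{-1/3}\, \mmd v \lesssim 2^{(7k-4j)/3}$. Taking the square root demanded by $TT^*$ yields $\|\mathfrak{C}_k^j\|_{L^2 \to L^2} \lesssim 2^{-\beta j} + 2^{(7k-4j)/6}$. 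With $\beta = 1/2$ the second term equals $2^{(700k-400j)/600}$, which is slightly smaller than the stated $2^{\beta(7k/3 - 399j/300)} = 2^{(700k - 399j)/600}$, so a loss of $j/300$ in the exponent is comfortably absorbed. The principal obstacle is the correct bookkeeping of $\ell$-powers in the matrix factorisation: extracting the critical lower bound $|\phi''| + |\phi'''| \gtrsim 2^{4j - 7k}|v|$ from the square-root phase requires the Taylor identity $a^2 - a'^2 = -\ell v/((\ell u + 1)(\ell(u-v)+1))$, and mis-counting the $\ell$-powers propagated through $\det \mathcal{M}$ would collapse the argument.
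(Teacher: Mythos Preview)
Your argument is correct and follows the same $TT^*$/matrix--factorisation route as the paper: compute the kernel of $\mathfrak{C}_k^j(\mathfrak{C}_k^j)^*$, rescale to unit scale, bound $|\phi''|+|\phi'''|$ from below via Lemma~\ref{matrix}, and apply van der Corput. The only differences are cosmetic---you rescale by $\ell=2^{j-2k}$ without absorbing the $b_k(x),b_k(y)$ factors into the change of variables, and you integrate the $|v|$--dependent van der Corput bound $(2^{4j-7k}|v|)^{-1/3}$ directly over $|v|\lesssim 1$ rather than first fixing a threshold $|\xi'|\ge 2^{-j/100}$ as the paper does; this is why you reach the slightly sharper exponent $(7k-4j)/6$ and $\beta=\tfrac12$ in place of the paper's $\beta=\tfrac{1}{200}$.
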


It is direct to notice that the pointwise bound 
\[
|\mathfrak{C}_{k}^jf(x)| \lesssim Mf(x)
\]
holds independently of $j \in [11k/6,2k].$ This estimate implies automatically the endpoint 
\begin{equation}\label{endpoint}
\|\mathfrak{C}_{k}^jf\|_{\infty} \lesssim \|f\|_{\infty}, \,\, \|\mathfrak{C}_{k}^jf\|_{1,\infty} \lesssim \|f\|_1
\end{equation}
bounds, so that interpolating between Proposition \ref{doubledecay} and estimate \eqref{endpoint} gives us the existence of $\beta_p > 0$ such that for all $k\ge 1$ and all $j \in [11k/6,2k]$, 
\[
\|\mathfrak{C}_{k}^jf\|_p \lesssim \left( 2^{-\beta_p j} + 2^{\beta_p \cdot( \frac{7}{3}k - \frac{399}{300} j)} \right) \|f\|_p. 
\]
\begin{proof}[Proof of Proposition \ref{doubledecay}] In order to get decay on $\mathfrak{C}_{k}^jf := \Phi_{N(x),b_k(x)}^{k,j} * f(x)$ in $L^2,$ it suffices to bound $\mathfrak{C}_{k}^j(\mathfrak{C}_{k}^j)^{*}f$ instead. A computation gives us that 
\[
\mathfrak{C}_{k}^j(\mathfrak{C}_{k}^j)^* f(x) = 1_{b_k(x) \in (1,2]} \int_{\R} (\Phi_{N(y),b_k(y)}^{k,j}*\tilde{\Phi}_{N(x),b_k(x)}^{k,j})(x-y) (1_{b_k(y) \in (1,2]}f)(y)\, \mmd y,
\]
with $\tilde{\Phi}_{N(\cdot),b_k(\cdot)}^{k,j}(\xi) := \overline{\Phi_{N(\cdot),b_k(\cdot)}^{k,j}} (-\xi).$ The convolution inside this integral is explicitly given by 
\[
\int_{\R} e^{i(N(x) - N(y))s} \cdot e^{2^k \cdot i (b_k(x)\sqrt{s+1} - b_k(y)\sqrt{s-\xi + 1})} \cdot \frac{\psi_0(2^{2k-j}s)}{s} \frac{\psi_0(2^{2k-j}(s-\xi))}{s- \xi} \, \mmd s,
\]
times a modulating factor depending on $\xi$ but not on $s.$ As our goal is to bound the absolute value of this expression, we can safely ignore it. In order to bound this expression, 
we assume, without loss of generality, that $b_k(y) \le b_k(x).$ By changing variables $s = 2^{j-2k} b_k(x)^{-2} s'$ and letting $\xi = 2^{j-2k} b_k(y)^{-2} \xi',$ we rewrite it as 
\[
\frac{b_k(y)^2}{2^{j-2k}} \cdot \int_{\R} e^{i(\tilde{N}(x) - \tilde{N}(y))s'} \cdot e^{i \tilde{R}_{\xi',j,k}(s')}\frac{ \psi_0(b_k(x)^{-2}s')}{s'} \frac{\psi_0(b_k(y)^{-2} (hs' - \xi '))}{hs' - \xi'} \, \mmd s',
\]
where 
$$\tilde{R}_{\xi',j,k}(s') = 2^{j/2} \cdot (\sqrt{s' + 2^{2k-j}b_k(x)^2} - \sqrt{h s' - \xi' + 2^{2k-j}b_k(y)^2}),$$ 
$\tilde{N}$ is a measurable function and $h:= \left(\frac{b_k(y)}{b_k(x)}\right)^2 \le 1$. Notice that the function 
$$\frac{ \psi_0(b_k(x)^{-2}s')}{s'} \frac{\psi_0(b_k(y)^{-2} (hs' - \xi '))}{hs' - \xi'}$$
is smooth, bounded and supported in $s' \in [1/4,4]$ with bounded $C^3$ norm, as $b_k(x),b_k(y) \in (1,2).$ This allows us to focus on the oscillatory nature of the phase. \\ 

The next lemma is the tool we need to bound this kernel pointwise.

\begin{lemma}\label{oscilest} Let $\Psi: \R \times \R \to \R$ be a smooth function supported in $\{ (s',\xi') \in \R^2 \colon s', hs' - \xi' \in [1/4,4]\},$ for some fixed positive parameter $h \le 1.$ It holds that, for all $v, \xi' \in \R$ and $j \in [11k/6,2k],$ 
\begin{align}\label{oscil}
 & \left| \int_{\R} e^{i v s'} \cdot e^{i \tilde{R}_{\xi',j,k}(s')}  \Psi(\xi',s') \mmd s' \right|   \cr
 \lesssim \sup_{\xi' \in [-4,4]} \|\Psi(\xi',\cdot)\|_{C^2} & \left( 1_{[-2^{-j/100},2^{-j/100}]}(\xi') +  2^{\frac{7}{3}k-\frac{399}{300}j} 1_{[-4,4]}(\xi')\right), \cr
\end{align}
where the implicit constant does not depend on $h \in (0,1].$ 
\end{lemma}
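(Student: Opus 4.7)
The approach follows the blueprint of Lemma~\ref{oscilest0} almost verbatim, with the key new feature being that the natural scale $2^{2k-j}$ (which now replaces the small parameter $2^{-j}$) can be as large as $2^{k/6}$ in the regime $j\in[11k/6,2k]$. This weakens every quantitative estimate and is precisely what forces the exponent $\tfrac{7k}{3}-\tfrac{399j}{300}$ in the statement. For $|\xi'|\le 2^{-j/100}$, I would simply invoke the triangle inequality: $\Psi(\xi',\cdot)$ is supported in $[1/4,4]$ and pointwise bounded by $\|\Psi(\xi',\cdot)\|_{C^{0}}$, which accounts for the first term in \eqref{oscil}.

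For $|\xi'|>2^{-j/100}$ I would apply van der Corput to the full phase $\varphi(s'):=vs'+\tilde R_{\xi',j,k}(s')$. With the shorthand $A=s'+2^{2k-j}b_k(x)^2$ and $B=hs'-\xi'+2^{2k-j}b_k(y)^2$, differentiation yields
\[
\begin{pmatrix}\varphi''(s')\\-\tfrac{2}{3}\varphi'''(s')\end{pmatrix}=-\frac{2^{j/2}}{4}\underbrace{\begin{pmatrix}1&1\\A^{-1}&hB^{-1}\end{pmatrix}}_{\mathcal{M}(s')}\underbrace{\begin{pmatrix}A^{-3/2}\\-h^{2}B^{-3/2}\end{pmatrix}}_{\mathcal{V}(s')}.
\]
The identity $h\,b_k(x)^{2}=b_k(y)^{2}$, built into the definition $h=(b_k(y)/b_k(x))^{2}$, produces the crucial cancellation $hA-B=\xi'$, so $\det\mathcal{M}(s')=\xi'/(AB)$.

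Since $s',\,hs'-\xi'\in[1/4,4]$ on $\mathrm{supp}\,\Psi$, $b_k(x),b_k(y)\in(1,2]$, and $2^{2k-j}\ge 1$, both $A$ and $B$ are comparable to $2^{2k-j}$ on the support. This delivers
\[
|\det\mathcal{M}(s')|\gtrsim|\xi'|\cdot 2^{2(j-2k)},\qquad\|\mathcal{M}(s')\|\lesssim 1,\qquad|\mathcal{V}(s')|\gtrsim 2^{3(j-2k)/2},
\]
and Lemma~\ref{matrix} applied to $\mathcal{M}(s')\mathcal{V}(s')$ then yields $|\mathcal{Q}(s')|\gtrsim|\xi'|\cdot 2^{4j-7k}\gtrsim 2^{399j/100-7k}$ once $|\xi'|\ge 2^{-j/100}$. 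After partitioning $[1/4,4]$ into a bounded number of sub-intervals on which one of $|\varphi''|,|\varphi'''|$ is uniformly of this size (using that $\varphi''$ and $\varphi'''$ are rational in $s'$ and hence have boundedly many zeros), Proposition~2 of Chapter~VIII in \cite{Stein1} gives the oscillatory integral bound $(2^{399j/100-7k})^{-1/3}\|\Psi(\xi',\cdot)\|_{C^{2}}=2^{7k/3-399j/300}\|\Psi(\xi',\cdot)\|_{C^{2}}$, which is exactly the second term of \eqref{oscil}.

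The principal obstacle is the arithmetic bookkeeping: the estimates on $|\det\mathcal{M}|$, $|\mathcal{V}|$ and ultimately $|\mathcal{Q}|$ must remain tight throughout the window $j\in[11k/6,2k]$ in order to reach the precise exponent $\tfrac{7k}{3}-\tfrac{399j}{300}$. The structural input that makes everything align is the cancellation $h\,b_k(x)^{2}=b_k(y)^{2}$, which is what causes $\det\mathcal{M}$ to vanish linearly in $\xi'$ and thereby translates the $\xi'\ne 0$ hypothesis into genuine oscillatory decay; without it, one could only hope for decay of the size that we already obtained in the unconditional Lemma~\ref{oscilest0}.
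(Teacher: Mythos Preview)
Your proposal is correct and follows essentially the same route as the paper: the trivial bound for $|\xi'|\le 2^{-j/100}$, and for $|\xi'|>2^{-j/100}$ the factorisation of $(\varphi'',-\tfrac{2}{3}\varphi''')$ as $2^{j/2}\mathcal{M}\mathcal{V}$, the determinant identity $\det\mathcal{M}=\xi'/(AB)$ via $hA-B=\xi'$, the size estimates $A,B\sim 2^{2k-j}$, and the invocation of Lemma~\ref{matrix} followed by van der Corput are exactly what the paper does. Your version is in fact slightly more careful than the paper's (you keep track of the sign and the factor $h^{2}$ in $\mathcal{V}$, and you make explicit the standard sub-interval partition needed before applying \cite[Proposition~VIII.2]{Stein1}), but none of this changes the argument.
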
 

\begin{proof}[Proof of Lemma \ref{oscilest}] This is an application of the stationary phase principle: \\ 

\noindent If $|\xi'| \le 2^{-\frac{1}{100} j},$ we bound the integral by 
taking the modulus inside, and we get the first summand $1_{[-2^{-\frac{j}{100}},2^{-\frac{j}{100}}]}(\xi')$ on the right hand side. \\ 

If, on the other hand, $|\xi'| \ge 2^{-\frac{j}{100}},$ then we denote for shortness 
$\phi(s') = vs' + \tilde{R}_{\xi',j,k}(s').$ We consider the vector 
$$ Q(s') = \begin{pmatrix} \phi''(s') \\ -\frac{2}{3} \phi'''(s') \end{pmatrix}.$$
Our aim is to prove that the norm $|Q(s')| \gtrsim 2^{\frac{399}{300}j - \frac{7}{3}k} $, as this implies that either the second or the third derivative of the phase $\phi(s')$ have this same, what enables us then to use stationary phase to conclude the proof. 
For that purpose, we write the vector $Q(s')$ alternatively as 
\[
\frac{2^{j/2}}{4} \cdot \begin{pmatrix} 1 & 1 \\ (s' + 2^{2k-j}b_k(x)^2)^{-1} & h(hs' - \xi' + 2^{2k-j}b_k(y)^2)^{-1} \end{pmatrix} \cdot V(s') = \frac{2^{j/2}}{4} \cdot M(s') \cdot V(s'),
\]
where $V(s') = \begin{pmatrix} (s' + 2^{2k-j}b_k(x)^2)^{-\frac{3}{2}} \\ (hs' - \xi' + 2^{2k-j}b_k(y)^2)^{-\frac{3}{2}} \end{pmatrix}.$ By the fact that $s',hs'-\xi' \in [1/4,4],$ we see that 
\[
|\det(M(s'))| \gtrsim \frac{|\xi'|}{2^{4k-2j}} \ge 2^{(2-\frac{1}{100})j - 4k}.
\]
It is also straightforward to see that the supremum norm $\|M(s')\| \lesssim 1$. By Lemma \ref{matrix}, the representation formula for $Q(s')$ and the fact that $|V(s')| \gtrsim 2^{\frac{3j}{2} - 3k}$, it holds that  
\[
|Q(s')| \gtrsim 2^{j/2} \cdot 2^{(2-\frac{1}{100})j - 4k} \cdot 2^{(\frac{3j}{2}-3k)} = 2^{(4-\frac{1}{100})j - 7k}.
\]
As this implies that either the second or third derivatives of the function $\phi(s')$ above are bounded from below by $2^{\frac{399}{100}j - 7k}$. By the stationary 
phase principle as stated in \cite[Proposition~VIII.2]{Stein1}, the oscillatory integral 
\[
\left| \int_{\R} e^{i\phi(s')} \Psi(\xi',s') \, \mmd s' \right| \lesssim \| \Psi(\xi',\cdot)\|_{C^2} 2^{\frac{7}{3}k-\frac{399}{300}j},
\]
whenever $|\xi'| \ge 2^{-j/100}.$ This gives us the second summand in the statement of Lemma \ref{oscilest}, and therefore the result.
\end{proof}

In order to finish the proof of the proposition, we notice that the oscillatory kernel given from the convolution defining the kernel of $\mathfrak{C}_{k}^j(\mathfrak{C}_{k}^j)^*$ fits the framework of Lemma \ref{oscilest}. Therefore, by using that 
$b_k(y),b_k(x) \in (1,2],$ we obtain 
\begin{align*}
 & |\mathfrak{C}_{k}^j(\mathfrak{C}_{k}^j)^*f(x)| \lesssim \frac{1}{2^{j-2k}} \int_{-4\cdot 2^{\frac{99j}{100}-2k}}^{4 \cdot 2^{\frac{99j}{100} -2k}} |f(x-y)| \, \mmd y \cr
 & + \frac{2^{(\frac{7k}{3} - \frac{399}{300}j)}}{2^{j-2k}} \int_{-10\cdot 2^{j-2k}}^{10 \cdot 2^{j-2k}} |f(x-y)| \, \mmd y \lesssim 2^{-j/100} Mf(x) + 2^{(\frac{7k}{3} - \frac{399}{300}j)} Mf(x).  \cr
\end{align*}
In particular, by boundedness of the maximal function,
\[
\|\mathfrak{C}_{k}^j(\mathfrak{C}_{k}^j)^*f\|_2 \lesssim \left(2^{-j/100} + 2^{(\frac{7k}{3} - \frac{399}{300}j)} \right) \|f\|_2. 
\]
This implies directly that
\[
\|\mathfrak{C}_{k}^jf\|_2 \lesssim \left(2^{-j/200} + 2^{\frac{1}{2}\left(\frac{7k}{3} - \frac{399}{300}j\right)}\right) \|f\|_2.
\]
This finishes the proof of the proposition by taking $\beta = \frac{1}{200}.$  
\end{proof}
With the proposition in hands, our previous considerations yield that there is $\beta_p > 0$ so that 
\[ 
\|\mathfrak{C}_{k}^jf\|_p \lesssim_p \left(2^{-\beta_p \cdot j} + 2^{\beta_p \cdot \left(\frac{7k}{3} - \frac{399}{300}j\right)}  \right) \|f\|_p.
\]
Summing for $j \in [11k/6,2k]$ yields 
\begin{align*}
\|\mathfrak{C}_{k}f\|_p \le & \sum_{j = 11k/6}^{2k} \|\mathfrak{C}_{k}^jf\|_p  \lesssim_p \sum_{j = 11k/6}^{2k} \left(2^{-\beta_p \cdot j} + 2^{\beta_p \cdot \left(\frac{7k}{3} - \frac{399}{300}j\right)}  \right) \|f\|_p \cr
 & \lesssim_p k \cdot (2^{-\frac{11\beta_p}{6} k} + 2^{\beta_p \cdot \left(\frac{7k}{3} - \frac{4389}{1800}k\right)}) \|f\|_p \lesssim_p k\cdot 2^{-\alpha_p k}\|f\|_p, \cr
 \end{align*}
for $\alpha_p = \frac{189}{1800} \cdot \beta_p.$ This implies, on the other hand, that
\[
\|\tilde{\mathfrak{C}}f\|_{p} \le \left(\sum_{k \ge 3} \|\mathfrak{C}_{k}f\|_{p}^p \right)^{1/p} \lesssim \left(\sum_{k \ge 3} k^p \cdot 2^{-p\alpha_p k}\right)^{1/p} \|f\|_p \lesssim \|f\|_p,
\]
which concludes the proof of boundedness for the intervals $(-1/2,-b(x)^{-1/6})\cup (b(x)^{-1/6},1/2).$ \\ 

We briefly remark on the necessity of a large degree approximation of the Taylor polynomial in the phase. Indeed, redoing the argument above shows that choosing a Taylor polynomial of 
degree $d$ splits naturally the integration interval as $(-1/2,-b(x)^{-1/(d+1)}) \cup [-b(x)^{-1/(d+1)},b(x)^{-1/(d+1)}] \cup (b(x)^{-1/(d+1)},1/2).$ In the middle interval, the same comparison holds as before, 
whereas the bounds for each scale in the outer intervals are \emph{not} altered. \\ 

That is, we still obtain a $2^{\beta\left(\frac{7k}{3} - \frac{399j}{300}\right)}$ factor, which we wish to decay 
exponentially with $k>0.$ For that, we must have, necessarily, $j>\frac{7}{4}k.$ But from our definitions, in the intervals $(b(x)^{-1/(d+1)},1/2),$ we obtain $j - 2k > \frac{-k}{d+1} \iff j > \frac{(2d+1)k}{d+1}.$ 
In order for this last factor to be $> \frac{7}{4},$ we need $d \ge 4.$ The choice $d=5$ comes about in order to relax the tightness in the various steps of the proof. \\ 

For the interval $[-2,-1/2]$ where the singularity of the phase $[t+1]^{1/2}$ lies, one notices that the kernel $\frac{1}{t}$ has upper and lower bounds, so that the integral 
\[
\left| \int_{-2}^{-1/2} f(x-t) e^{iN(x)t} e^{ib(x)[t+1]^{1/2}} \, \frac{\mmd t}{t} \right| \lesssim \int_{-2}^{-1/2} |f(x-t)| \, \mmd t \lesssim Mf(x).
\]
We are then left with the outermost intervals $[-R,-2] \cup [1/2,R].$ As the analysis is virtually the same in both cases -- and as the phase $b(x) \cdot [t+1]^{1/2}$ does not change sign in either of the intervals --,
we focus on the positive interval $[1/2,R].$ \\ 

We decompose the operator $\mathfrak{C}^2$ directly this time, without the need to identify the scale of $b.$ Explicitly, we write the integral
\[
\int_{1/2}^R f(x-t) e^{iN(x)t} e^{ib(x)[t+1]^{1/2}} \, \frac{\mmd t}{t},
\]
modulo error terms that amount to a constant times the Hardy--Littlewood maximal function of $f$, as 

\begin{equation}\label{bounds}
\sum_{j \ge 2\log_2 b(x)-2}^{\log_2 R} \int_{\R} f(x-t) e^{iN(x)t} e^{ib(x)[t+1]^{1/2}} \psi_0(2^{-j}b(x)^2t) \, \frac{\mmd t}{t} = \sum_{j \ge 1} \mathfrak{T}^jf(x).
\end{equation}
Notice that we encompass the fact that $j \ge 2 \log_2 b(x) - 2$ already in the definition of $\mathfrak{T}^j.$ 
We must now only prove exponential decay in $j$ in the $L^p$ norms of $\mathfrak{T}^jf.$ The proof of this fact follows essentially the same line as before: after all the reductions, we need to look at an oscillatory integral representing the kernel of $\mathfrak{T}^j (\mathfrak{T}^j)^*$.
This is given by a similar oscillatory integral to the one before, i.e., 
\[
\frac{b(y)^2}{2^j} \int_{\R} e^{i(\tilde{N}(x) - \tilde{N}(y))s'} \cdot e^{i \tilde{R}_{\xi',j,k}(s')}\frac{ \psi_0(s')}{s'} \frac{\psi_0(hs' - \xi ')}{hs' - \xi'} \, \mmd s',
\]
where 
$$\tilde{\tilde{R}}_{\xi',j,k}(s') = 2^{j/2} \cdot (\sqrt{s' + 2^{-j}b(x)^2} - \sqrt{h s' - \xi' + 2^{-j}b(y)^2}),$$ 
$\tilde{N}$ is a measurable function and $h:= \left(\frac{b(y)}{b(x)}\right)^2 \le 1$. What changes now are the estimates we can achieve with the stationary phase method. Now, the vector 
\[
\tilde{Q}(s') = \begin{pmatrix} \tilde{\phi}''(s') \\ -\frac{2}{3}\tilde{\phi}'''(s') \end{pmatrix} = 2^{j/2} \tilde{M}(s') \cdot \tilde{V}(s') 
\]
has slightly different properties: it is easy to see that 
$$\tilde{M}(s') = \begin{pmatrix} 1 & 1 \\ (s' + 2^{-j}b(x)^2)^{-1} & h(hs' - \xi' + 2^{-j}b(y)^2)^{-1} \end{pmatrix}, $$
and therefore $\|\tilde{M}(s')\| \lesssim 1$ still, but now, as $b(x)^2 2^{-j} \lesssim 1,$ the determinant bounds change to 
\[
|\det (\tilde{M}(s'))| \gtrsim |\xi'| \ge 2^{-j/100}. 
\]
Also, we can only ensure that $|\tilde{V}(s')| \gtrsim 1.$ This implies, by Lemma \ref{matrix}, that $|\tilde{Q}(s')| \gtrsim 2^{j/3}.$ Stationary phase and the considerations as in Lemma \ref{oscilest} give the bound
\[
\|\mathfrak{T}^jf\|_2 \lesssim 2^{-j/200} \|f\|_2.
\]
It is direct to conclude from the definition that $|\mathfrak{T}^jf| \lesssim Mf(x),$ so that interpolation gives the existence of $\theta_p>0$ so that $\|\mathfrak{T}^j f\|_p \lesssim 2^{-\theta_p j} \|f\|_p.$ as $b(x) \ge 10,$ we see that $j \ge 1,$ so that 
the $L^p$ norm of the sum in \eqref{bounds} is pointwise bounded by 
\[
\sum_{j \ge 1} 2^{-\theta_p j} \|f\|_p \lesssim_p \|f\|_p. 
\]
This concludes the analysis of $L^p$ bounds of $\mathfrak{C}^2$ and therefore the proof of Theorem \ref{main}. $\square$ \\

As mentioned in the introduction, one wonders whether the analysis for the intervals $(-1/2,-b(x)^{-1/6}) \cup (b(x)^{-1/6},1/2)$ in the proof of Theorem \ref{main} can be suppressed by using a better polynomial approximation. The next proposition proves that employing
our approximation technique is impossible without being forced to allow the degree of the polyonomial to depend on $b$:

\begin{prop}\label{polyinfty} Suppose that, for each $b \ge 1,$ we are given a polynomial $P_b(t)$ such that 
\[
\int_{-1/2}^{1/2} |b\sqrt{t+1} - P_b(t)| \, \frac{\mmd t}{|t|} \le 1.
\]
Then $\lim_{b \to \infty} \text{deg }(P_b) = + \infty.$ 
\end{prop}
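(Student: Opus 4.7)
The plan is to argue by contradiction and compactness. Suppose that $\deg P_b$ stays bounded along some sequence $b_n \to \infty$, say $\deg P_{b_n} \le d$ for all $n$. Divide the hypothesis through by $b_n$ to obtain
\[
\int_{-1/2}^{1/2} \left| \sqrt{t+1} - \frac{P_{b_n}(t)}{b_n} \right| \frac{\mmd t}{|t|} \le \frac{1}{b_n} \xrightarrow[n \to \infty]{} 0.
\]
Set $Q_n(t) := P_{b_n}(t)/b_n \in \mathbb{P}_d$, the space of real polynomials of degree $\le d$. The strategy is to show that the sequence $(Q_n)$ lives in a compact subset of $\mathbb{P}_d$ (viewed as a finite-dimensional normed space), extract a convergent subsequence, and note that the limit must coincide with $\sqrt{t+1}$ on $[-1/2,1/2]$, contradicting the fact that $\sqrt{t+1}$ is not a polynomial.

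The first concrete step is to bound the $Q_n$ uniformly. Since $\frac{1}{|t|} \ge 2$ on $[-1/2,1/2]$, the weighted norm in the hypothesis controls the ordinary $L^1$ norm, so $\|Q_n - \sqrt{t+1}\|_{L^1[-1/2,1/2]} \le \tfrac{1}{2 b_n}$, and in particular $\|Q_n\|_{L^1[-1/2,1/2]}$ is uniformly bounded. On the finite-dimensional space $\mathbb{P}_d$ restricted to $[-1/2,1/2]$, all norms are equivalent, so $(Q_n)$ is uniformly bounded in $L^\infty[-1/2,1/2]$ (equivalently, the vectors of coefficients of the $Q_n$ lie in a bounded set of $\R^{d+1}$). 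A subsequence therefore converges uniformly on $[-1/2,1/2]$ to some $Q \in \mathbb{P}_d$.

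Passing to the limit in the weighted $L^1$ bound (or simply using uniform convergence on compact subsets of $(0,1/2]$ together with Fatou's lemma) gives
\[
\int_{-1/2}^{1/2} |\sqrt{t+1} - Q(t)| \frac{\mmd t}{|t|} = 0,
\]
so $Q(t) = \sqrt{t+1}$ on $[-1/2,1/2]$, contradicting the fact that $\sqrt{t+1}$ is not a polynomial. (One can equivalently phrase this as saying that $E_d := \inf_{P \in \mathbb{P}_d} \int_{-1/2}^{1/2} |\sqrt{t+1} - P(t)| \frac{\mmd t}{|t|} > 0$ for every fixed $d$, and then the hypothesis forces $b \cdot E_d \le 1$, hence $b$ bounded.)

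There is no real obstacle here beyond making the compactness step rigorous; the only thing to watch is that the weighted norm really controls enough to give boundedness in $\mathbb{P}_d$, which is why the explicit comparison $\frac{1}{|t|} \ge 2$ on $[-1/2,1/2]$ is recorded above. Everything else is a soft finite-dimensional argument.
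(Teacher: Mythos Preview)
Your proof is correct and follows essentially the same approach as the paper: assume bounded degree, use equivalence of norms on the finite-dimensional space $\mathbb{P}_d$ to extract a convergent subsequence, and derive a contradiction since $\sqrt{t+1}$ is not a polynomial. The only cosmetic difference is that the paper first observes $P_b(0)=b$ and divides through by $t$ before invoking finite-dimensional compactness, whereas you bypass this step via the inequality $1/|t|\ge 2$ on $[-1/2,1/2]$ to control the unweighted $L^1$ norm directly---a mild simplification of the same argument.
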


\begin{proof}[Proof of Proposition \ref{polyinfty}] In order for the integral 
\[
\int_{-1/2}^{1/2} |b\sqrt{t+1} - P_b(t)| \, \frac{\mmd t}{|t|}
\]
to be finite, we must have that $P_b(0) = b.$ The condition on the polynomials given by the proposition then becomes 
\[
\int_{-1/2}^{1/2} \left|(\sqrt{t+1} - 1) - \left(\frac{P_b(t)}{b}-1\right)\right| \frac{\mmd t}{|t|} \le \frac{1}{b}. 
\]
The last inequality reveals that (a) $\frac{P_b(t)}{b} - 1 \to \sqrt{t+1} - 1$ in $L^1([-1/2,1/2],\frac{\mmd t}{|t|});$ (b) The sequence $\frac{\left(\frac{P_b(t)}{b} - 1 \right)}{t}$ is bounded in $L^1(-1/2,1/2).$ \\ 

Now we suppose that there is an upper bound on the degrees of the polynomials $P_b.$ As the sequence $\frac{\left(\frac{P_b(t)}{b} - 1 \right)}{t}$ lies in a finite-dimensional polynomial space, 
all norms are equivalent. In particular, the sum of coefficients norm is bounded by the $L^1(-1/2,1/2)$ norm. This and (b) give us that, denoting this norm by $\| \cdot\|_{\text{coeff}},$ 
\[
\left\| \frac{\left(\frac{P_b(t)}{b} - 1 \right)}{t} \right\|_{\text{coeff}} \le C, \, \forall b \ge 1.
\]
But, from (a), we know that $\frac{\left(\frac{P_b(t)}{b} - 1 \right)}{t} \to (\sqrt{t+1} -1)/t$ in $L^1(-1/2,1/2).$ We then extract a subsequence of $\{b_k\}_k$ so that (a) $\frac{\left(\frac{P_{b_k}(t)}{b_k} - 1 \right)}{t} \to \frac{\sqrt{t+1}-1}{t}$
\emph{almost everywhere} in $(-1/2,1/2);$ and (b) the coefficients of $\frac{\left(\frac{P_{b_k}(t)}{b_k} - 1 \right)}{t}$ converge. But this is already a contradiction, for then, as the degree is bounded and coefficients converge, 
$\frac{\left(\frac{P_{b_k}(t)}{b_k} - 1 \right)}{t}$ should converge to a polynomial pointwise, which is clearly not the case for $\frac{\sqrt{t+1}-1}{t}.$ 
\end{proof}  

\section{Comments and remarks}\label{comments}

\subsection{Question \ref{parabcarl} and Theorem \ref{lines}} As briefly sketched in the introduction, if the answer to Question \ref{parabcarl} is \emph{affirmative}, then Theorem \ref{lines} holds. We explain this relationship in greater detail here. As discussed before, we let $\ell$ be a line in $\R^2$. Without loss of generality, we 
suppose it is given by an equation of the form $(a \eta + b,\eta),$ as the remaining case of having equation $(\eta,C),$ $C$ constant, is completely analogous. Let $\mathcal{C}_{[\ell]} = \sup_{b \in \R} \mathcal{C}_{a,b}$ be the operator associated to the equivalence class of lines $\ell' \sim \ell$, where two lines are equivalent if they have 
the same slope. \\ 

We fix $g \in \mathcal{S}(\R)$ and write $\xi = \theta \cdot v_{\ell} + t \cdot v_{\ell}^{\perp},$ $\,(\theta,t) \in \R^2$ and $v_{\ell} = (a,1).$  If we fix $N_{(\cdot,\cdot)}:\R^2 \to \R$ a measurable function and $F_{\delta}$ such that $\widehat{F_{\delta}}(\theta \cdot v_{\ell} + t \cdot v_{\ell}^{\perp}) = \widehat{g}(\theta) \psi_{\delta}(t)$, we have 
\[
\mathcal{C}_2 (F_{\delta})(z) \ge \left| \int_{\R} \left( \int_{\R} \widehat{g}(\theta) e^{2 \pi i \theta \langle z,v_{\ell} \rangle} m_2(\theta \cdot v_{\ell} + t \cdot v_{\ell}^{\perp} + N_z) \, \mmd \theta \right) e^{2 \pi i t \langle z, v_{\ell}^{\perp} \rangle} \psi_{\delta}(t) \, \mmd t \right|.
\]
Choose $\psi_{\delta} = \frac{1}{\delta^{1/2}} \varphi\left(\frac{x}{\delta}\right),$ with $1_{[-1,1]} \le \varphi \le 1_{[-2,2]}$ smooth. Let 
\[
G_{[\ell]}(z,t) =  \int_{\R} \widehat{g}(\theta) e^{2 \pi i \theta \langle z,v_{\ell} \rangle} m_2(\theta \cdot v_{\ell} + t \cdot v_{\ell}^{\perp} + N_z) \, \mmd \theta. 
\]
Since $g \in \mathcal{S}(\R),$ it is easy to see by the dominated convergence theorem that $G_{[\ell]}(z,t) \to G_{[\ell]}(z) :=  \int_{\R} \widehat{g}(\theta) e^{2 \pi i \theta \langle z,v_{\ell} \rangle} m(\theta \cdot v_{\ell} + N_z) \, \mmd \theta$ pointwise, as $m_2$ is bounded and continuous in $\R^2.$ By choosing $N$ suitably, we can make $|G_{[\ell]}(z)| \ge \frac{1}{2} C_{[\ell]}g(\langle z, v_{\ell} \rangle), \, \forall z \in \R^2.$ 
Reasoning again with dominated convergence gives  
\[ 
\left|\int_{\R} G_{[\ell]}(z,t)e^{2 \pi i t \langle z, v_{\ell}^{\perp} \rangle} \frac{\psi_{\delta}(t)}{\delta^{1/2}} \, \mmd t - \int_{\R} G_{[\ell]}(z)e^{2 \pi i t \langle z, v_{\ell}^{\perp} \rangle} \frac{\psi_{\delta}(t)}{\delta^{1/2}} \, \mmd t\right| \to 0
\]
as $\delta \to 0.$ Moreover, each of the integrals above is bounded as a function of $z.$ These considerations imply that, for $R > 0$ fixed, 
\begin{equation}
\|\mathcal{C}_2 (F_{\delta}) \|_{L^2(B_R)} + o^R_{\delta}(1) \ge \frac{1}{2} \| \delta^{1/2} \widehat{\varphi}(\delta \langle \cdot, v_{\ell}^{\perp} \rangle) C_{[\ell]}g(\langle \cdot, v_{\ell}\rangle)\|_{L^2(B_R)} \ge \frac{1}{5} \|C_{[\ell]}g\|_{L^2\left(-\frac{R}{2},\frac{R}{2}\right)}.
\end{equation}
We use here $o_{\delta}^R(1)$ to denote a quantity that goes to $0$ as $\delta \to 0,$ with $R$ fixed. But, assuming the answer of Question \ref{parabcarl} to be affirmative, 
\[
\|\mathcal{C}_2(F_{\delta})\|_{L^2(B_R)} \le C \|F_{\delta}\|_{L^2(\R^2)} = C\|\widehat{F_{\delta}}\|_{L^2(\R^2)}.
\]
By using the explicit representation of $F_{\delta}$ and the choice of $\psi,$ we get that the right hand side converges to $C \|g\|_{L^2(\R)}$ as $\delta \to 0.$ Putting together, we have 
\[
\|C_{[\ell]} g\|_{L^2\left(-\frac{R}{2},\frac{R}{2}\right)} \le 5C \cdot \|g\|_{L^2(\R)}.
\]
Notice that $C$ is independent of both $R, [\ell].$ By taking $R \to \infty$ in this last inequality one obtains Theorem \ref{lines}. 

\subsection{Hilbert transform along more general curves} Throughout this article, we have investigated the case of the Hilbert transform along the parabola $(t,t^2).$ There is, however, no reason not to consider more general \emph{monomial} curves of the form $(t,t^m).$ For those, it is natural to expect that the reductions performed 
in Section \ref{reduction} carry through, and that effectively one needs to bound the operator 
\[
f \mapsto \sup_{N,b} \left| \int_{-1/2}^{1/2} f(x-t) e^{iNt} \cdot e^{ib[t+1]^{1/m}} \, \frac{\mmd t}{t}  \right|,
\]
where $[u]^r$ represents either $|u|^r$ or $\text{sign}(u)|u|^r.$ The proof in Section \ref{modeluni} is not particular to the $m=2$ case, and therefore can be adapted to prove that these operators are bounded in $L^p.$ The reduction to these operators is not as direct as the quadratic case, though. For the case of higher degrees, one would have to use a form of decomposition 
as in the recent article by Guo \cite{Guo1}. Following this idea, it should be possible to exploit the \emph{polynomial} case $(t,Q(t))$, $Q \in \text{Poly}(\R\colon \R)$. In order to keep the exposition short, we do not investigate these questions further. 

\subsection{Oscillatory integrals and the proof of Theorem \ref{lines}} The proof of boundedness of the operators $\mathfrak{C}^R$ highlights what seems to be a general principle: if we are given a function $\eta: \R \to \R$ whose derivative is singular in a neighbourhood of the origin, but sufficiently regular (together with its higher degree derivatives) everywhere else, then the maximal function 
\begin{equation}\label{multipoly}
f \mapsto \sup_{\substack{{P \in \mathcal{P}_d} \\{N \in \R}}} \left| \int_{\R} f(x-t) e^{iP(t) + iN \cdot \eta(t+1)} \, \frac{\mmd t}{t} \right|,
\end{equation}
where $\mathcal{P}_d$ is the space of polynomials of degrees $\le d,$ should be bounded in $L^2.$ In this article, we have explored the case $d=1$, where $\eta(t) = |t|^{1/2}$ or $\eta(t) = \text{sign}(t)|t|^{1/2}.$  We notice, however, that by taking further derivatives of the phase, defining approximation polynomials 
with higher degrees and running the basic strategy we set here, there should not stand any barrier to prove the case of general $d>1.$ This suggests the existence of an underlying principle for a more general class of functions $\eta$ whose decay are sufficiently controllable. We currently believe this principle is intimately 
related to Question \ref{parabcarl}.

\section{Ackowledgements}

The author is indebted to his doctoral advisor Prof. Dr. Christoph Thiele for having suggested investigating Theorem \ref{lines}. He also thanks Joris Roos, for helpful discussions, in both
early and late stages of this project, and Pavel Zorin-Kranich for discussions on the main results of this manuscript and insights on his article \cite{pavel}. Finally, the author ackowledges finantial support by the Deutscher Akademischer Austauschdienst (DAAD).

\end{document}